\numberwithin{figure}{section}
\numberwithin{table}{section}
\numberwithin{equation}{section}
\newenvironment{abstr}[1]{ \vspace{.05in}\footnotesize
       \parindent .2in
         {\upshape\bfseries #1. }\ignorespaces}{\par\vspace{.1in}}
\newenvironment{Abstract}{\begin{abstr}{Abstract}}{\end{abstr}}
\newenvironment{keywords}{\begin{abstr}{Key words}}{\end{abstr}}
\newenvironment{AMS}{\begin{abstr}{AMS subject classifications}}{\end{abstr}}
\newtheorem{theorem}{Theorem}[section]
\newtheorem{lemma}[theorem]{Lemma}
\newtheorem{proposition}[theorem]{Proposition}
\theoremstyle{definition}
\newtheorem{definition}[theorem]{Definition}
\DeclareMathOperator{\supp}{supp}
\DeclareMathOperator{\Int}{int}
\DeclareMathOperator{\diam}{diam}
\DeclareMathOperator{\Div}{div}
\DeclareMathOperator{\curl}{curl}
\renewcommand{\Re}{\operatorname{Re}}
\newcommand{\rz}{\mathbb{R}}       
\newcommand{\cz}{\mathbb{C}}       
\newcommand\Va{\mathbf{a}}
\newcommand\Vb{\mathbf{b}}
\newcommand\Ve{\mathbf{e}}
\newcommand\Vf{\mathbf{f}}
\newcommand\Vn{\mathbf{n}}
\newcommand\Vv{\mathbf{v}}
\newcommand\Vu{\mathbf{u}}
\newcommand\Vw{\mathbf{w}}
\newcommand\Vx{\mathbf{x}}
\newcommand\Vz{\mathbf{z}}
\newcommand\VF{\mathbf{F}}
\newcommand\VH{\mathbf{H}}
\newcommand\VW{\mathbf{W}}
\newcommand\Vpsi{\boldsymbol{\psi}}
\newcommand\Vphi{\boldsymbol{\phi}}
\newcommand\CB{\mathcal{B}}
\newcommand\CG{\mathcal{G}}
\newcommand\CK{\mathcal{K}}
\newcommand\CL{\mathcal{L}}
\newcommand\CN{\mathcal{N}}
\newcommand\CT{\mathcal{T}}
\newcommand\UN{\textup{N}}
\newcommand\id{\operatorname{id}}
\begin{document}

\title{Numerical homogenization for indefinite H(curl)-problems}

\author{Barbara Verf\"urth\thanks{Institut f\"ur Analysis und Numerik, Westf\"alische Wilhelms-Universit\"at M\"unster, Einsteinstr.\ 62, D-48149 M\"unster.}}

\date{}
\maketitle

\begin{Abstract}
In this paper, we present a numerical homogenization scheme for indefinite, time-harmonic Maxwell's equations involving potentially rough (rapidly oscillating) coefficients.
The method involves an $\mathbf{H}(\mathrm{curl})$-stable, quasi-local operator, which allows for a correction of coarse finite element functions such that order optimal (w.r.t.\ the mesh size) error estimates are obtained.
To that end, we extend the procedure of [D.\ Gallistl, P.\ Henning, B.\ Verf\"urth, {\em Numerical homogenization of H(curl)-problems}, arXiv:1706.02966, 2017] to the case of indefinite problems.
In particular, this requires a careful analysis of the well-posedness of the corrector problems as well as the numerical homogenization scheme.
\end{Abstract}

\begin{keywords}
multiscale method, wave propagation, Maxwell's equations, finite element method 
\end{keywords}

\begin{AMS}
65N30, 65N15, 65N12, 35Q61, 78M10
\end{AMS}

\section{Introduction}
Time-harmonic Maxwell's equations, which model electromagnetic wave propagation, play an essential role in many physical applications.
If the coefficients are rapidly oscillating on a fine scale as in the context of photonic crystals or metamaterials, standard discretizations suffer from bad convergence rates and a large pre-asymptotic range due to the multiscale nature, the low regularity, and the indefiniteness of the problem.

In this paper, we consider a numerical homogenization scheme to cope with the multiscale nature and the resolution condition, which couples the maximal mesh size to the frequency and is typical for indefinite wave propagation problems, see \cite{BS00}.
Analytical homogenization for locally periodic $\VH(\curl)$-problems shows that the solution can be decomposed into a macroscopic contribution (without rapid oscillations) and a fine-scale corrector, see \cite{CFS17, HOV16a, HOV16b}.
In \cite{GHV17}, this was extended beyond the periodic case and without assuming scale separation.
Using a suitable interpolation operator, the exact solution is decomposed into a coarse part, which is a good approximation in $\VH(\Div)^\prime$, and a corrector contribution, which then gives a good approximation in $\VH(\curl)$. 
Furthermore, the corrector can be quasi-localized, allowing for an efficient computation.
Analytical homogenization and other multiscale methods can also be applied to indefinite problems \cite{CFS17, HOV16a} so that it is natural to examine this extension also for the numerical homogenization of \cite{GHV17}.

The technique of numerical homogenization presented there is known as Localized Orthogonal Decomposition (LOD) and was originally proposed in \cite{MP14}.
Among many applications, we point to elliptic boundary value problems \cite{HM14}, the wave equation \cite{AH17}, mixed elements \cite{HHM16}, and in particular Helmholtz problems \cite{GP15, Pet17, OV17}.
The works on the Helmholtz equation reveal that the LOD can also reduce the so-called pollution effect.
Only a natural and reasonable resolution condition of a few degrees of freedom per wavelength is needed in the LOD and the local corrector problems have to be solved on patches which grow logarithmically with the wave number.
The crucial observation is that the bilinear form is coercive on the kernel space of a suitable interpolation operator. 
For Maxwell's equations this is not possible due to the large kernel of the curl-operator. 
However, a wave number independent inf-sup-stability of the bilinear form over the kernel of the interpolation operator is proved using a regular decomposition.
This inf-sup-stability forces us to localize the corrector problem in a non-conforming manner, which leads to additional terms in the analysis and may be of independent interest.
Still, we are able to define a well-posed localized numerical homogenization scheme which allow for order optimal (w.r.t.\ the mesh size) a priori estimates. 

The paper is organized as follows. 
Section \ref{sec:problem} introduces the model problem and the necessary notation for meshes and the interpolation operator.
We introduce an ideal numerical homogenization scheme in Section \ref{sec:LODideal}. 
We localize the corrector operator, present the resulting main scheme and its a priori analysis in Section \ref{sec:LOD}.

The notation $a\lesssim b$ denotes $a\leq Cb $ with a constant $C$ independent of the mesh size $H$, the oversampling parameter $m$ and the frequency $\omega$.
Bold face letters will indicate vector-valued quantities and all functions are complex-valued, unless explicitly mentioned.
We study the high-frequency case, i.e.\ $\omega\gtrsim1$ is assumed.

\section{Problem setting}
\label{sec:problem}
\subsection{Model problem}
Let $\Omega\subset\mathbb{R}^3$ be an open, bounded, contractible domain with polyhedral Lipschitz boundary with outer unit normal $\Vn$.
For any bounded subdomain $G\subset \Omega$, the spaces $\VH(\curl, G)$, $\VH_0(\curl, G)$ and $\VH(\Div, G)$ denote the usual curl- and div-conforming spaces; see \cite{Monk} for details.
We will omit the domain $G$ if it is equal to the full domain $\Omega$.
In addition to the standard inner product, we equip $\VH(\curl, G)$ with the following $\omega$-dependent inner product
\[(\Vv, \Vw)_{\curl, \omega, G}:=(\curl\Vv, \curl\Vw)_{L^2(G)}+\omega^2(\Vv, \Vw)_{L^2(G)}.\]

Let $\Vf\in \VH(\Div, \Omega)$ and let $\mu\in L^\infty(\Omega, \rz^{3 \times 3})$ and $\varepsilon\in L^\infty(\Omega, \rz^{3 \times 3})$ be uniformly elliptic.
For any open subset $G\subset\Omega$, we define the bilinear form $\CB_{G}: \VH(\curl,G)\times \VH(\curl,G)\to \cz$ as
\begin{equation}
\label{eq:sesquiform}
\CB_{G}(\Vv, \Vpsi):=(\mu\curl \Vv, \curl\Vpsi)_{L^2(G)}
                      -\omega^2(\varepsilon\Vv, \Vpsi)_{L^2(G)},
\end{equation}
and set $\CB:=\CB_\Omega$.
The form $\CB_{G}$ is obviously continuous and the continuity constant is independent of $\omega$ if we use the norm $\|\cdot\|_{\curl, \omega}$.

We now look for $\Vu\in \VH_0(\curl, \Omega)$ such that
\begin{equation}
\label{eq:problem}
\CB(\Vu, \Vpsi)=(\Vf, \Vpsi)_{L^2(\Omega)} \quad\text{for all } \Vpsi\in \VH_0(\curl, \Omega).
\end{equation}
We implicitly assume that the above problem is a multiscale problem, i.e.\ the coefficients $\mu$ and $\varepsilon$ are rapidly varying on a very fine sale.
Fredholm theory guarantees the existence of a unique solution $\Vu$ to \eqref{eq:problem} provided that $\omega$ is not an eigenvalue of curl-curl-operator, which we will assume from now on.
This in particular implies that there is $\gamma(\omega)>0$ such that $\CB$ is inf-sup stable with constant $\gamma(\omega)$, i.e.\
\begin{equation}
\label{eq:infsupanalytic}
\inf_{\Vv\in \VH_0(\curl)\setminus\{0\}}\sup_{\Vpsi\in\VH_0(\curl)\setminus\{0\}}\frac{|\CB(\Vv, \Vpsi)|}{\|\Vv\|_{\curl, \omega}\|\Vpsi\|_{\curl, \omega}}\geq\gamma(\omega).
\end{equation}

\subsection{Mesh and interpolation operator}
\label{subsec:mesh}
Let $\CT_H$ be a regular partition of $\Omega$ into tetrahedra, such that $\cup\CT_H=\overline{\Omega}$ and any two distinct $T, T'\in \CT_H$ are either disjoint or share a common vertex, edge or face.
We assume the partition $\CT_H$ to be shape-regular and quasi-uniform.
The global mesh size is defined as $H:=\max\{ \diam(T)|T\in \CT_{H}\}$.
$\CT_H$ is a coarse mesh in the sense that it does not resolve the fine-scale oscillations of the parameters.

Given any subdomain $G\subset \overline{\Omega}$ define the patches via
\[\UN^1(G):=\UN(G):=\Int(\cup\{T\in \CT_{H}|T\cap\overline{G}\neq \emptyset\})\qquad \text{and}\qquad\UN^m(G):=\UN(\UN^{m-1}(G)).\]
The shape regularity implies that there is a uniform bound $C_{\mathrm{ol}, m}$ on the number of elements in the $m$-th order patch
\[\max_{T\in \CT_{H}}\operatorname{card}\{K\in \CT_{H}|K\subset\overline{\UN^m(T)}\}\leq C_{\mathrm{ol}, m}\]
and the quasi-uniformity implies that $C_{\mathrm{ol}, m}$ depends polynomially on  $m$.
We abbreviate $C_{\mathrm{ol}}:=C_{\mathrm{ol}, 1}$.
We denote the lowest order N{\'e}d{\'e}lec finite element, cf.\ \cite[Section 5.5]{Monk}, by
\[
  \mathring{\CN}(\CT_H):=\{\Vv\in \VH_0(\curl)|\forall T\in \CT_H: \Vv|_T(\Vx)=\Va_T\times\Vx+\Vb_T \text{ with }\Va_T, \Vb_T\in\cz^3\}.
\]

We require an $\VH(\curl)$-stable interpolation operator (with some additional properties) for the numerical homogenization. 
The only suitable candidate is the Falk-Winter interpolation operator, see \cite{FW14}.
Some important properties are summarized below, see \cite{GHV17} for details and proofs.

\begin{proposition}\label{p:proj-pi-H-E}
There exists a projection $\pi_H^E:\VH_0(\curl)\to \mathring{\CN}(\CT_H)$ with the following local stability properties:
For all $\Vv\in \VH_0(\curl)$ and all $T\in \CT_H$ it holds that
\begin{align}
\label{eq:stabilityL2}
\|\pi_H^E(\Vv)\|_{L^2(T)}&\lesssim \bigl(\|\Vv\|_{L^2(\UN(T))}+H\|\curl\Vv\|_{L^2(\UN(T))}\bigr),\\*
\label{eq:stabilitycurl}
\|\curl\pi_H^E(\Vv)\|_{L^2(T)}&\lesssim \|\curl\Vv\|_{L^2(\UN(T))}.
\end{align}
Moreover, for any $\Vv\in \VH_0(\curl, \Omega)$, there are $\Vz\in \VH^1_0(\Omega)$ and $\theta\in H^1_0(\Omega)$ such that
$\Vv-\pi_H^E(\Vv)=\Vz+\nabla \theta$
with the local bounds for every $T\in \CT_H$
\begin{equation}
\label{eq:regulardecomp}
\begin{split}
H^{-1}\|\Vz\|_{L^2(T)}+\|\nabla \Vz\|_{L^2(T)}&\lesssim\|\curl\Vv\|_{L^2(\UN^3(T))},\\
H^{-1}\|\theta\|_{L^2(T)}+\|\nabla \theta\|_{L^2(T)}&\lesssim\bigl(\|\Vv\|_{L^2(\UN^3(T))}+H\|\curl\Vv\|_{L^2(\UN^3(T))}\bigr),
\end{split}
\end{equation}
where $\nabla \Vz$ stands for the Jacobi matrix of $\Vz$.  
\end{proposition}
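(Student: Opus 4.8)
The plan is to recall the construction of the Falk--Winter quasi-interpolation operator from \cite{FW14} and the localized regular decomposition extracted from it in \cite{GHV17}, and to point out where each asserted estimate enters. First I would take $\pi_H^E$ to be the boundary-conforming Falk--Winter projection onto $\mathring{\CN}(\CT_H)$, realized as a composition $\pi_H^E=\CI_H^E\circ R_H^E$ of a local smoothing operator $R_H^E$ and the canonical N\'ed\'elec interpolation $\CI_H^E$. The smoothing operator is built from $L^2$-type projections $Q_z$ onto suitable polynomial spaces attached to the vertex patches $\UN(\{z\})$, glued by the nodal hat-function partition of unity $\{\lambda_z\}$ (modified near $\partial\Omega$ so that the homogeneous boundary conditions are respected); composing with $\CI_H^E$, whose degrees of freedom are well defined on the resulting piecewise polynomial field, makes $\pi_H^E$ a projection and renders $\pi_H^E\Vv|_T$ dependent only on $\Vv|_{\UN(T)}$. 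The commuting-diagram property with the lowest order finite element de Rham complex, in particular $\curl\,\pi_H^E=\pi_H^F\,\curl$ with $\pi_H^F$ the analogously constructed Raviart--Thomas quasi-interpolant, is part of the construction.

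For the stability estimates I would work block by block on a reference patch and scale. The projections $Q_z$ are $L^2$-stable with constants depending only on shape regularity, the partition of unity satisfies $\|\lambda_z\|_{L^\infty}\lesssim 1$ and $\|\nabla\lambda_z\|_{L^\infty}\lesssim H^{-1}$, and $\CI_H^E$ is stable on piecewise polynomials by an inverse estimate; combining these yields \eqref{eq:stabilityL2}, the term $H\|\curl\Vv\|_{L^2(\UN(T))}$ arising because differentiating $\lambda_z$ costs one power of $H^{-1}$ which, through the commuting structure, is paid against the lower order part of $\Vv$. Estimate \eqref{eq:stabilitycurl} then follows from the commuting relation: $\curl\,\pi_H^E\Vv=\pi_H^F(\curl\Vv)$, and $\pi_H^F$ is locally $L^2$-stable, $\|\pi_H^F\Vg\|_{L^2(T)}\lesssim\|\Vg\|_{L^2(\UN(T))}$, applied with $\Vg=\curl\Vv$.

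For the regular decomposition set $\Vw:=\Vv-\pi_H^E\Vv\in\VH_0(\curl)$; since $\pi_H^E$ is idempotent and commutes with curl we have $\curl\Vw=(\id-\pi_H^F)\curl\Vv$. I would then invoke a localized regular decomposition: a bounded right inverse of $\curl$ on admissible data, assembled patchwise from local Poincar\'e/Bogovskii-type operators and the partition of unity, produces $\Vz\in\VH^1_0(\Omega)$ with $\curl\Vz=\curl\Vw$ and $H^{-1}\|\Vz\|_{L^2(T)}+\|\nabla\Vz\|_{L^2(T)}\lesssim\|\curl\Vw\|_{L^2(\UN^2(T))}$; using \eqref{eq:stabilitycurl} to bound $\|\curl\Vw\|_{L^2}$ against $\|\curl\Vv\|_{L^2}$ on one more layer of elements gives the dependence on $\UN^3(T)$. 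The remainder $\Vw-\Vz$ is curl-free, and since $\Omega$ is contractible it equals $\nabla\theta$ for a unique $\theta\in H^1_0(\Omega)$; the bound on $\nabla\theta$ follows from $\nabla\theta=\Vw-\Vz$ together with \eqref{eq:stabilityL2} and the bound on $\Vz$, while the factor $H^{-1}\|\theta\|_{L^2(T)}$ comes from a local Poincar\'e inequality compatible with the patchwise construction of $\theta$.

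The main obstacle is the localized regular decomposition used in the last step: the classical Helmholtz decomposition on a contractible Lipschitz domain is immediate but completely non-local, so one has to commute cut-offs and the partition of unity through the de Rham complex and control the correction terms generated by $\nabla(\lambda_z\,\cdot)\neq\lambda_z\nabla(\cdot)$, all while tracking the homogeneous boundary conditions near $\partial\Omega$ so that $\Vz\in\VH^1_0(\Omega)$ and $\theta\in H^1_0(\Omega)$. Fixing the number of element layers at exactly three rather than some larger shape-regularity constant is a matter of bookkeeping with the minimal-overlap variant of Falk--Winter; any fixed number of layers would in any case suffice for the later use of the proposition.
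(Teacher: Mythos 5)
The paper does not prove this proposition itself; it imports it verbatim from the cited references (the operator and its commuting, local-stability properties from Falk--Winther, and the localized regular decomposition with the $\UN^3(T)$ patches from the companion paper on the elliptic case). Your sketch follows exactly that route --- smoothing composed with canonical interpolation, the cochain property $\curl\pi_H^E=\pi_H^F\curl$ for \eqref{eq:stabilitycurl}, and a patchwise right inverse of $\curl$ plus local Poincar\'e bounds for \eqref{eq:regulardecomp} --- and you correctly isolate the genuinely delicate step (the local construction of $\theta$ needed for $H^{-1}\|\theta\|_{L^2(T)}$), so the proposal is consistent with the proof the paper relies on.
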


The stability estimates in particular imply that $\pi_H^E$ is stable with respect to the $\|\cdot\|_{\curl, \omega}$-norm if the condition  $\omega H\lesssim 1$ is fulfilled:
\[\|\pi_H^E\Vv\|_{\curl, \omega}\lesssim \|\Vv\|_{\curl, \omega}\qquad\qquad \text{if }\quad\omega H\lesssim 1.\]

\section{Ideal numerical homogenization}
\label{sec:LODideal}
In this section we introduce an ideal numerical homogenization scheme which approximates the exact solution in $\VH_0(\curl)$ by a coarse part (which itself is a good approximation in $H^{-1}(\Omega)$) and a corrector contribution.
The idea is based on the direct sum splitting $\VH_0(\curl)=\mathring{\CN}(\CT_H)\oplus\VW$ with $\VW:=\ker (\pi_H^E)$ the kernel of the Falk-Winther interpolation operator introduced in the previous section.
The regular decomposition estimates \eqref{eq:regulardecomp} directly imply for any $\Vw\in\VW$
\begin{equation}
\label{eq:dualnormW}
\|\Vw\|_{\VH(\Div)^\prime}\lesssim H\|\Vw\|_{\VH(\curl)}.
\end{equation}

From now on, we assume the resolution condition
\begin{equation}
\label{eq:resolcond}
\omega H\lesssim 1,
\end{equation}
which means that a few degrees of freedom per wavelength are required. 
Under this resolution condition, $\CB$ is stable on $\VW$, as details the next lemma.

\begin{lemma}[Properties of $\VW$]
\label{lem:wellposedW}
Let $\Vw\in \VW$ be decomposed as $\Vw=\Vz+\nabla \theta$ and \eqref{eq:resolcond} be satisfied. Then
\begin{itemize}
\item we have a ($\omega$-independent) norm equivalence between $\|\cdot\|_{\curl, \omega}$ and $\||\Vw|\|^2:=\|\curl \Vz\|^2+\omega^2\|\nabla \theta\|^2$
\item there is $\alpha>0$ independent of $\omega$ such that
\[\inf_{\Vw\in\VW\setminus\{0\}}\sup_{\Vphi\in\VW\setminus\{0\}}\frac{|\CB(\Vw, \Vphi)|}{\|\Vw\|_{\curl, \omega}\|\Vphi\|_{\curl, \omega}}\geq \alpha.\]
\end{itemize}
\end{lemma}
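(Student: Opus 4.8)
The plan is to prove the norm equivalence first and then use it, together with a regular decomposition (Proposition~\ref{p:proj-pi-H-E}) and the analytical inf-sup stability \eqref{eq:infsupanalytic}, to establish the inf-sup stability on $\VW$. For the norm equivalence, fix $\Vw = \Vz + \nabla\theta \in \VW$. The estimate $\|\curl\Vz\|^2 + \omega^2\|\nabla\theta\|^2 \lesssim \|\Vw\|_{\curl,\omega}^2$ is the easy direction: $\curl\Vz = \curl\Vw$ gives the first term, and for the second I would sum the local bounds \eqref{eq:regulardecomp} over $T \in \CT_H$ (using finite overlap of the patches $\UN^3(T)$), obtaining $\|\Vz\|_{\VH^1} + \|\nabla\theta\|_{\curl,\omega}^{-1}\omega\cdot(\ldots) \lesssim \|\Vw\|_{\curl,\omega}$ after also invoking $\omega H \lesssim 1$ to absorb the $H\|\curl\Vv\|$ term. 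For the reverse direction $\|\Vw\|_{\curl,\omega}^2 \lesssim \|\curl\Vz\|^2 + \omega^2\|\nabla\theta\|^2$: since $\curl\Vw = \curl\Vz$, it remains to bound $\omega^2\|\Vw\|_{L^2}^2 = \omega^2\|\Vz + \nabla\theta\|_{L^2}^2 \lesssim \omega^2\|\Vz\|_{L^2}^2 + \omega^2\|\nabla\theta\|_{L^2}^2$; here $\omega^2\|\Vz\|_{L^2}^2 \lesssim \omega^2 H^2 \|\curl\Vw\|^2 \lesssim \|\curl\Vz\|^2$ by the summed version of \eqref{eq:regulardecomp} (first line) and $\omega H \lesssim 1$. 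So the summed regular-decomposition estimates plus the resolution condition give the equivalence cleanly.

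For the inf-sup stability on $\VW$, the strategy is a perturbation argument off the coercive "curl part" of $\CB$. Write $\CB(\Vw,\Vphi) = (\mu\curl\Vw,\curl\Vphi) - \omega^2(\varepsilon\Vw,\Vphi)$. Given $\Vw = \Vz + \nabla\theta$, the natural test function candidate is something like $\Vphi = \Vz - \nabla\theta$ (or $\Vw$ itself plus a correction), so that the ellipticity of $\mu$ on the $\curl\Vz$-term and of $\varepsilon$ on the $\omega^2\|\nabla\theta\|^2$-term both contribute with favorable sign, while the cross terms $(\mu\curl\Vz,\curl(\nabla\theta)) = 0$ vanish and the remaining $\omega^2$ cross terms $\omega^2(\varepsilon\Vz,\nabla\theta)$ and $\omega^2(\varepsilon\Vz,\Vz)$ must be controlled. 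The term $\omega^2(\varepsilon\Vz,\Vz) \lesssim \omega^2\|\Vz\|_{L^2}^2 \lesssim \omega^2 H^2\|\curl\Vz\|^2$ is small by the resolution condition, so it can be absorbed into the good $\|\curl\Vz\|^2$ term. The genuinely problematic term is the $L^2$-pairing $\omega^2(\varepsilon\Vz,\nabla\theta)$, which is \emph{not} small and not of definite sign — and this is where the argument cannot be purely algebraic.

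The hard part will therefore be handling this indefinite cross term $\omega^2(\varepsilon\Vz,\nabla\theta)$: there is no reason for it to be dominated by $\|\curl\Vz\|^2 + \omega^2\|\nabla\theta\|^2$. Here I would invoke the analytical inf-sup stability \eqref{eq:infsupanalytic} together with a compactness/Aubin–Nitsche-type duality argument: the point is that $\gamma(\omega) > 0$ holds on all of $\VH_0(\curl)$, and $\VW$ is a closed subspace with the specific structure coming from $\ker\pi_H^E$; one shows that the interpolation operator $\pi_H^E$, being $\VH(\curl)$-stable and yielding the estimate \eqref{eq:dualnormW}, perturbs the analytical inf-sup constant only by a factor controlled by $\omega H$ (hence by the resolution condition). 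Concretely, given $\Vw \in \VW$, pick $\Vpsi \in \VH_0(\curl)$ realizing \eqref{eq:infsupanalytic} for $\Vw$, decompose $\Vpsi = \pi_H^E\Vpsi + (\Vpsi - \pi_H^E\Vpsi)$ with $\Vpsi - \pi_H^E\Vpsi \in \VW$, and estimate $\CB(\Vw, \pi_H^E\Vpsi)$: the $\curl$-part is controlled by \eqref{eq:stabilitycurl}, and the $\omega^2 L^2$-part is handled via \eqref{eq:dualnormW} applied to $\Vw$ (testing the dual norm against $\pi_H^E\Vpsi \in \VH(\Div)$, using that $\pi_H^E\Vpsi$ is piecewise polynomial hence in $\VH(\Div)$ elementwise, with a suitable bound on its $\VH(\Div)$-norm). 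This shows $\CB(\Vw, \Vpsi - \pi_H^E\Vpsi) = \CB(\Vw,\Vpsi) - \CB(\Vw,\pi_H^E\Vpsi) \geq (\gamma(\omega) - C\omega H)\|\Vw\|_{\curl,\omega}\|\Vpsi\|_{\curl,\omega}$, and since $\Vpsi - \pi_H^E\Vpsi \in \VW$ with norm $\lesssim \|\Vpsi\|_{\curl,\omega}$, we recover the inf-sup bound on $\VW$ with a constant $\alpha$ that is $\omega$-independent once $\omega H$ is small enough — this is exactly where the resolution condition \eqref{eq:resolcond} is essential. A subtlety to watch: whether $\gamma(\omega)$ is truly $\omega$-independent is not claimed in \eqref{eq:infsupanalytic}; if it degenerates, the argument instead gives $\alpha$ of the same order as $\gamma(\omega)$, but the lemma as stated presumably relies on the homogenization-theory fact that $\gamma(\omega)$ stays bounded below, which I would cite or treat as a standing assumption.
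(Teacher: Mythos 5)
Your treatment of the norm equivalence is fine and matches the paper's. The inf-sup part, however, contains a genuine gap, and the route you chose cannot prove the statement as formulated. The lemma asserts an inf-sup constant $\alpha>0$ \emph{independent of $\omega$}, whereas any perturbation of the global inf-sup inequality \eqref{eq:infsupanalytic} yields at best a constant of the order of $\gamma(\omega)$, which for this indefinite problem is \emph{not} bounded below uniformly in $\omega$ (if it were, the paper's oversampling condition $m\gtrsim|\log\gamma(\omega)|$ and the discussion of $m\approx\log(\omega)$ would be pointless). Your closing remark that one may ``treat as a standing assumption'' that $\gamma(\omega)$ stays bounded below is precisely the assumption you are not entitled to; the $\omega$-independence of $\alpha$ is the whole content of the second bullet. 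There is also a technical failure inside your perturbation argument: $\CB(\Vw,\pi_H^E\Vpsi)$ is not small, since its curl part is only bounded by $\|\curl\Vw\|\,\|\curl\pi_H^E\Vpsi\|\lesssim\|\Vw\|_{\curl,\omega}\|\Vpsi\|_{\curl,\omega}$ with an $O(1)$ constant, so you cannot reach $(\gamma(\omega)-C\omega H)\|\Vw\|_{\curl,\omega}\|\Vpsi\|_{\curl,\omega}$.

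The idea you are missing is that for $\Vw\in\VW=\ker\pi_H^E$ the cross term you declare ``genuinely problematic'' \emph{is} small, because the regular decomposition \eqref{eq:regulardecomp} gives $\|\Vz\|_{L^2}\lesssim H\|\curl\Vw\|_{L^2}$. Hence
\[
\omega^2|(\varepsilon\Vz,\nabla\theta)|\lesssim \bigl(\omega H\|\curl\Vz\|\bigr)\bigl(\omega\|\nabla\theta\|\bigr),
\]
which Young's inequality and the resolution condition \eqref{eq:resolcond} absorb into $\|\curl\Vz\|^2+\omega^2\|\nabla\theta\|^2$. The paper's proof is exactly the ``purely algebraic'' sign-flip argument you set up and then abandoned: test with $(\id-\pi_H^E)F(\Vw)\in\VW$, where $F(\Vw)=\Vz-\nabla\theta$, use $\curl\pi_H^E\Vz=\curl\pi_H^E\Vw=0$ (commuting property) and $\pi_H^E\nabla\theta=-\pi_H^E\Vz$, and absorb all remaining terms ($\omega^2\|\Vz\|^2$, the cross terms, and the terms involving $\pi_H^E\Vz$) using the smallness of $\|\Vz\|_{L^2}$ together with the stability \eqref{eq:stabilityL2}--\eqref{eq:stabilitycurl}. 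This yields $\Re\{\CB(\Vw,(\id-\pi_H^E)F(\Vw))\}\gtrsim\||\Vw|\|^2\gtrsim\|\Vw\|_{\curl,\omega}^2$ with constants depending only on the coefficient bounds and the resolution condition, hence an $\omega$-independent $\alpha$; no appeal to \eqref{eq:infsupanalytic} is made, or possible.
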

\begin{proof}
For the norm equivalence we obtain using \eqref{eq:regulardecomp} and $\curl\Vw=\curl\Vz$
\begin{align*}
\||\Vw|\|^2=\|\curl\Vz\|^2\!+\!\omega^2\|\nabla\theta\|^2\lesssim \|\curl\Vw\|^2+\omega^2\|\Vw\|^2+\omega^2H^2\|\curl\Vw\|^2\lesssim\|\Vw\|_{\curl, \omega}^2,\\
\|\Vw\|_{\curl, \omega}^2\leq \|\Vz\|_{\curl, \omega}^2+\|\nabla \theta\|^2_{\curl, \omega}\lesssim \|\curl\Vz\|^2\!+\!\omega^2H^2\|\curl\Vz\|^2+\omega^2\|\nabla\theta\|^2\lesssim \||\Vw|\|^2.
\end{align*}
For the inf-sup-constant, set $F(\Vw)=\Vz-\nabla\theta$. 
Observe that $\curl\pi_H^E\Vz=\curl\pi_H^E\Vw=0$ because of the commuting property of $\pi_H^E$. 
Then
\begin{align*}
\Re\{\CB(\Vw, (\id-\pi_H^E)F(\Vw))\}&\gtrsim \|\curl\Vz\|^2+\omega^2\|\nabla\theta\|^2-\omega^2\|\Vz\|^2-2\omega^2|(\varepsilon\Vz, \nabla\theta)|\\
&\quad-2\omega^2|(\varepsilon\Vz, \pi_H^E\Vz)|-2\omega^2|(\varepsilon\nabla \theta, \pi_H^E\Vz)|,
\end{align*}
where we used $\pi_H^E\nabla\theta=-\pi_H^E\Vz$ because of $\pi_H^E\Vw=0$.
Applying Young's inequality, the stability of $\pi_H^E$ \eqref{eq:stabilityL2}--\eqref{eq:stabilitycurl}, estimate \eqref{eq:regulardecomp} and using the resolution condition \eqref{eq:resolcond}, we arrive at
\[\Re\{\CB(\Vw, (\id-\pi_H^E)F(\Vw))\}\gtrsim \|\curl\Vz\|^2+\omega^2\|\nabla\theta\|^2\gtrsim \|\Vw\|_{\curl, \omega}^2\]
because of the norm equivalence.
The estimate $\|(\id-\pi_H^E)F(\Vw)\|_{\curl, \omega}\lesssim \|\Vw\|_{\curl, \omega}$ finally gives the claim.
\end{proof}

In contrast to coercive problems, unique solvability is not guaranteed when $\CB$ is restricted to subspaces.
Therefore, the inf-sup-stability of $\CB$ on $\VW$ is the crucial ingredient to introduce a well-defined Corrector Green's Operator.

\begin{definition}
For $\VF\in \VH_0(\curl)^\prime$, we define the Corrector Green's Operator
\begin{equation}
\label{eq:correctorgreen}
\CG:\VH_0(\curl)^\prime\to\VW \qquad \quad \text{by}\qquad\quad \CB(\CG(\VF), \Vw)=\VF(\Vw)\quad \text{ for all }\Vw\in\VW.
\end{equation}
\end{definition}
\indent
Let $\CL:\VH_0(\curl)\to\VH_0(\curl)^\prime$ denote the differential operator associated with $\CB$ and set $\CK:=-\CG\circ\CL$.
Inspired by the procedure in \cite{GHV17}, an ideal numerical homogenization scheme consists in solving the variational problem over the ``multiscale'' space $(\id+\CK)\mathring{\CN}(\CT_H)$.
The well-posedness of this scheme is proved in the next lemma.

\begin{lemma}
Under the resolution condition, we have with $\gamma(\omega)$ from \eqref{eq:infsupanalytic} that
\begin{equation}
\label{eq:infsupideal}
\inf_{\Vv_H\in \mathring{\CN}(\CT_H)\setminus\{0\}}\sup_{\Vpsi_H\in\mathring{\CN}(\CT_H)\setminus\{0\}}\frac{|\CB((\id+\CK)\Vv_H, (\id+\CK)\Vpsi_H)|}{\|\Vv_H\|_{\curl, \omega}\|\Vpsi_H\|_{\curl, \omega}}\gtrsim\gamma(\omega).
\end{equation}
\end{lemma}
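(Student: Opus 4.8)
The plan is to transfer the global inf-sup stability \eqref{eq:infsupanalytic} to the multiscale space $(\id+\CK)\mathring{\CN}(\CT_H)$ by means of the defining orthogonality of the Corrector Green's Operator; recall that $\CG$, and hence $\CK=-\CG\circ\CL$, is well defined thanks to Lemma~\ref{lem:wellposedW} and the resolution condition \eqref{eq:resolcond}. First I would record two elementary facts about $\id+\CK$ on $\mathring{\CN}(\CT_H)$. Since $(\CL\Vv_H)(\Vw)=\CB(\Vv_H,\Vw)$ and $\CB(\CG(\CL\Vv_H),\Vw)=(\CL\Vv_H)(\Vw)$ for all $\Vw\in\VW$, we get $\CB(\CK\Vv_H,\Vw)=-\CB(\Vv_H,\Vw)$ and therefore
\[
  \CB((\id+\CK)\Vv_H,\Vw)=0\qquad\text{for all }\Vv_H\in\mathring{\CN}(\CT_H),\ \Vw\in\VW .
\]
Moreover $\CK\Vv_H\in\VW=\ker\pi_H^E$ and $\pi_H^E$ is a projection onto $\mathring{\CN}(\CT_H)$, so $\pi_H^E(\id+\CK)\Vv_H=\Vv_H$; combined with the $\|\cdot\|_{\curl,\omega}$-stability of $\pi_H^E$ (valid under \eqref{eq:resolcond}) this gives $\|\Vv_H\|_{\curl,\omega}\lesssim\|(\id+\CK)\Vv_H\|_{\curl,\omega}$. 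In particular $\id+\CK$ is injective on $\mathring{\CN}(\CT_H)$, so the quotient in \eqref{eq:infsupideal} is meaningful.

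Next, fix $\Vv_H\in\mathring{\CN}(\CT_H)\setminus\{0\}$ and abbreviate $\Vz:=(\id+\CK)\Vv_H$. By \eqref{eq:infsupanalytic} — whose supremum is attained at the Riesz representative of the bounded functional $\CB(\Vz,\cdot)$ in the $(\cdot,\cdot)_{\curl,\omega}$-inner product, or else approached by a near-maximiser with the limit taken at the end — pick $\Vphi\in\VH_0(\curl)\setminus\{0\}$ with $|\CB(\Vz,\Vphi)|\geq\gamma(\omega)\,\|\Vz\|_{\curl,\omega}\|\Vphi\|_{\curl,\omega}$, and set $\Vphi_H:=\pi_H^E\Vphi\in\mathring{\CN}(\CT_H)$, so that $\Vphi-\Vphi_H\in\VW$ and $\CK\Vphi_H\in\VW$. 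Applying the orthogonality relation above twice — once to discard the $\VW$-part $\Vphi-\Vphi_H$ of $\Vphi$ and once to insert the $\VW$-element $\CK\Vphi_H$ — yields
\[
  \CB\bigl((\id+\CK)\Vv_H,(\id+\CK)\Vphi_H\bigr)=\CB(\Vz,\Vphi_H)=\CB(\Vz,\Vphi).
\]

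Finally I would compare norms: $\|\Vphi_H\|_{\curl,\omega}=\|\pi_H^E\Vphi\|_{\curl,\omega}\lesssim\|\Vphi\|_{\curl,\omega}$ by stability of $\pi_H^E$ under \eqref{eq:resolcond}, and $\|\Vv_H\|_{\curl,\omega}\lesssim\|\Vz\|_{\curl,\omega}$ from the first paragraph, whence
\[
  \frac{|\CB((\id+\CK)\Vv_H,(\id+\CK)\Vphi_H)|}{\|\Vv_H\|_{\curl,\omega}\|\Vphi_H\|_{\curl,\omega}}
  =\frac{|\CB(\Vz,\Vphi)|}{\|\Vv_H\|_{\curl,\omega}\|\Vphi_H\|_{\curl,\omega}}
  \gtrsim\gamma(\omega).
\]
Taking the supremum over $\Vpsi_H\in\mathring{\CN}(\CT_H)\setminus\{0\}$ and then the infimum over $\Vv_H$ gives \eqref{eq:infsupideal}.

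The argument is, in the end, the standard Localized Orthogonal Decomposition "orthogonality" bookkeeping; the only points requiring care are the attainment of the supremum in \eqref{eq:infsupanalytic} (handled via Riesz representation or a limiting argument) and, more importantly, ensuring that the constants produced by the two invocations of $\pi_H^E$-stability remain independent of $\omega$ — which is precisely where the resolution condition \eqref{eq:resolcond} is used. I do not expect any genuine analytic difficulty beyond that.
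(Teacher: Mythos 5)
Your proposal is correct and follows essentially the same route as the paper: obtain a near-maximising test function $\Vphi$ from the analytic inf-sup condition \eqref{eq:infsupanalytic}, replace it by $(\id+\CK)\pi_H^E\Vphi$ using the orthogonality $\CB((\id+\CK)\Vv_H,\Vw)=0$ for $\Vw\in\VW$, and conclude via the $\pi_H^E$-stability together with $\pi_H^E(\id+\CK)\Vv_H=\Vv_H$. The only cosmetic difference is that the paper packages your two orthogonality steps into the single identity $(\id+\CK)\pi_H^E\Vphi=(\id+\CK)\Vphi$, which rests on $(\id+\CK)|_{\VW}=0$.
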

\begin{proof}
Fix $\Vv_H\in\mathring{\CN}(\CT_H)$. From \eqref{eq:infsupanalytic}, there exists $\Vpsi\in\VH_0(\curl)$ with $\|\Vpsi\|_{\curl, \omega}=1$ such that
\[|\CB((\id+\CK)\Vv_H, \Vpsi)|\geq \gamma(\omega) \|(\id+\CK)\Vv_H\|_{\curl, \omega}.\]
By the definition of $\CK$, it holds that $(\id+\CK)\pi_H^E\Vpsi=(\id+\CK)\Vpsi$ and $\CB((\id+\CK)\Vv_H, \Vw)=0$ for all $\Vw\in\VW$.
Thus, we obtain
\begin{align*}
|\CB((\id+\CK)\Vv_H, (\id+\CK)\pi_H^E\Vpsi)|&=|\CB((\id+\CK)\Vv_H, (\id+\CK)\Vpsi)|=|\CB((\id+\CK)\Vv_H, \Vpsi)|\\
&\geq \gamma(\omega) \|(\id+\CK)\Vv_H\|_{\curl, \omega}.
\end{align*}
The claim follows now by the norm equivalence
\[\|\Vv_H\|_{\curl, \omega}=\|\pi_H^E(\id+\CK)\Vv_H\|_{\curl, \omega}\lesssim \|(\id+\CK)\Vv_H\|_{\curl, \omega},\]
which is a result of the stability of $\pi_H^E$.
\end{proof}

Before we introduce the ideal numerical homogenization scheme, we summarize the approximation and stability properties of the Corrector Green's Operator, cf.\ \cite{GHV17}.

\begin{lemma}[Ideal corrector estimates]
\label{lem:correcestimatesideal}
Any $\VF\in \VH_0(\curl)^\prime$ and any $\Vf\in \VH(\Div)$ satisfy
\begin{align}
\label{eq:correcdual}
H\|\CG(\VF)\|_{\curl, \omega}+\|\CG(\VF)\|_{\VH(\Div)^\prime}&\lesssim H\alpha^{-1}\|\VF\|_{\VH_0(\curl)^\prime}\\
\label{eq:correcdiv}
H\|\CG(\Vf)\|_{\curl, \omega}+\|\CG(\Vf)\|_{\VH(\Div)^\prime}&\lesssim H^2\alpha^{-1}\|\Vf\|_{\VH(\Div)}.
\end{align}
\end{lemma}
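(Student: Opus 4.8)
The plan is to obtain both estimates directly from two ingredients already at hand: the $\omega$-independent inf-sup stability of $\CB$ over $\VW$ from Lemma~\ref{lem:wellposedW}, and the dual-norm bound \eqref{eq:dualnormW} valid for every element of $\VW$; no further tools are needed.

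For \eqref{eq:correcdual} I would start from the defining relation $\CB(\CG(\VF),\Vw)=\VF(\Vw)$ for all $\Vw\in\VW$. Applying the inf-sup inequality of Lemma~\ref{lem:wellposedW} to $\CG(\VF)\in\VW$ and inserting the right-hand side gives
\[
\alpha\,\|\CG(\VF)\|_{\curl,\omega}\le\sup_{\Vphi\in\VW\setminus\{0\}}\frac{|\CB(\CG(\VF),\Vphi)|}{\|\Vphi\|_{\curl,\omega}}=\sup_{\Vphi\in\VW\setminus\{0\}}\frac{|\VF(\Vphi)|}{\|\Vphi\|_{\curl,\omega}}\le\|\VF\|_{\VH_0(\curl)'},
\]
so that multiplying by $H$ takes care of the first summand of \eqref{eq:correcdual}. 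For the second summand I would use $\CG(\VF)\in\VW$ together with \eqref{eq:dualnormW} and the elementary bound $\|\cdot\|_{\VH(\curl)}\le\|\cdot\|_{\curl,\omega}$ (valid because $\omega\gtrsim1$) to get $\|\CG(\VF)\|_{\VH(\Div)'}\lesssim H\|\CG(\VF)\|_{\VH(\curl)}\le H\|\CG(\VF)\|_{\curl,\omega}$, which closes the estimate.

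For \eqref{eq:correcdiv} the argument is identical, except that the functional is now $\Vphi\mapsto(\Vf,\Vphi)_{L^2(\Omega)}$ with $\Vf\in\VH(\Div)$, and the point is that on a test function $\Vphi\in\VW$ one gains an extra power of $H$: by the definition of the $\VH(\Div)'$-norm and \eqref{eq:dualnormW} again,
\[
|(\Vf,\Vphi)_{L^2(\Omega)}|\le\|\Vf\|_{\VH(\Div)}\,\|\Vphi\|_{\VH(\Div)'}\lesssim H\,\|\Vf\|_{\VH(\Div)}\,\|\Vphi\|_{\VH(\curl)}\lesssim H\,\|\Vf\|_{\VH(\Div)}\,\|\Vphi\|_{\curl,\omega}.
\]
Feeding this into the inf-sup inequality as before yields $\|\CG(\Vf)\|_{\curl,\omega}\lesssim H\alpha^{-1}\|\Vf\|_{\VH(\Div)}$, hence $H\|\CG(\Vf)\|_{\curl,\omega}\lesssim H^2\alpha^{-1}\|\Vf\|_{\VH(\Div)}$; the $\VH(\Div)'$-term is then handled exactly as in the previous paragraph and inherits the same extra factor of $H$.

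I do not expect a real obstacle here: all the substance is contained in the well-posedness of $\CG$, i.e.\ in Lemma~\ref{lem:wellposedW}. The only point that needs to be phrased with some care is why \eqref{eq:correcdiv} improves on \eqref{eq:correcdual} by one power of $H$ — this is precisely because every element of $\VW=\ker(\pi_H^E)$ is $\mathcal{O}(H)$-small in $\VH(\Div)'$, so that a source term controlled in $\VH(\Div)$, rather than merely in $\VH_0(\curl)'$, acts on such test functions with the correspondingly smaller norm.
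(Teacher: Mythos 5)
Your argument is correct and is exactly the standard proof that the paper omits (it only cites [GHV17]): inf-sup stability of $\CB$ on $\VW$ from Lemma~\ref{lem:wellposedW} gives the $\|\cdot\|_{\curl,\omega}$-bound on $\CG$, and the kernel property \eqref{eq:dualnormW} supplies both the $\VH(\Div)'$-estimate and the extra factor of $H$ when the source is tested as an $L^2$-pairing with $\Vf\in\VH(\Div)$. No gaps; the only point worth making explicit in a write-up is that $\omega\gtrsim1$ is used to pass between $\|\cdot\|_{\VH(\curl)}$ and $\|\cdot\|_{\curl,\omega}$, which you do note.
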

\indent
Collecting the results of the previous lemmas, we have the following result on our ideal numerical homogenization scheme.

\begin{theorem}
\label{thm:idealLOD}
Let $\Vu$ denote the exact solution to \eqref{eq:problem} and $\Vu_H=\pi_H^E\Vu$. 
Then
\begin{itemize}
\item it holds that $\Vu=\Vu_H+\CK(\Vu_H)+\CG(\Vf)$
\item assuming \eqref{eq:resolcond}, $\Vu_H$ is characterized as the unique solution to
\begin{equation}
\label{eq:idealLOD}
\CB((\id+\CK)\Vu_H, (\id+\CK)\Vpsi_H)=(\Vf, (\id+\CK)\Vpsi_H)\qquad \text{for all }\Vpsi_H\in \mathring{\CN}(\CT_H)
\end{equation}
\item assuming \eqref{eq:resolcond}, it holds that
\begin{equation}
\label{eq:idealapriori}
\|\Vu-(\id+\CK)\Vu_H\|_{\curl, \omega}+\|\Vu-\Vu_H\|_{\VH(\Div)^\prime}\lesssim H\|\Vf\|_{\VH(\Div)}.
\end{equation}
\end{itemize}
\end{theorem}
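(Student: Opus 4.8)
The plan is to establish the three assertions in order, since each builds on the previous one. For the first bullet, I would start from the exact solution $\Vu$ and apply the direct-sum splitting $\VH_0(\curl)=\mathring{\CN}(\CT_H)\oplus\VW$ to write $\Vu=\pi_H^E\Vu+(\Vu-\pi_H^E\Vu)$ with $\Vu-\pi_H^E\Vu\in\VW$. The goal is to identify the corrector part $\Vu-\Vu_H$ with $\CK(\Vu_H)+\CG(\Vf)$. To see this, I test \eqref{eq:problem} with $\Vw\in\VW$: since $\CB(\Vu,\Vw)=(\Vf,\Vw)$, and since $\CK(\Vu_H)=-\CG(\CL\Vu_H)$ satisfies $\CB(\CK(\Vu_H),\Vw)=-\CB(\Vu_H,\Vw)$ while $\CG(\Vf)$ satisfies $\CB(\CG(\Vf),\Vw)=(\Vf,\Vw)$ for all $\Vw\in\VW$, we get that $\Vu-\Vu_H$ and $\CK(\Vu_H)+\CG(\Vf)$ both lie in $\VW$ and produce the same functional on $\VW$ via $\CB$. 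The inf-sup-stability of $\CB$ on $\VW$ from Lemma \ref{lem:wellposedW} then forces them to be equal, giving $\Vu=\Vu_H+\CK(\Vu_H)+\CG(\Vf)$.

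For the second bullet, I would first verify that $\Vu_H$ solves \eqref{eq:idealLOD}: testing \eqref{eq:problem} with $(\id+\CK)\Vpsi_H$ for $\Vpsi_H\in\mathring{\CN}(\CT_H)$ gives $\CB(\Vu,(\id+\CK)\Vpsi_H)=(\Vf,(\id+\CK)\Vpsi_H)$, and by the first bullet $\Vu=(\id+\CK)\Vu_H+\CG(\Vf)$; since $\CB(\CG(\Vf),(\id+\CK)\Vpsi_H)=\CB(\CG(\Vf),\Vpsi_H)+\CB(\CG(\Vf),\CK\Vpsi_H)$ and both arguments $\Vpsi_H$... careful: $\CK\Vpsi_H\in\VW$ but $\Vpsi_H\notin\VW$ in general. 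Instead I use that $\CB((\id+\CK)\Vv_H,\Vw)=0$ for all $\Vw\in\VW$ (built into the definition of $\CK$), so $\CB(\CG(\Vf),(\id+\CK)\Vpsi_H)$ must be handled via the adjoint — alternatively, and more cleanly, note $(\id+\CK)\Vpsi_H - \Vpsi_H = \CK\Vpsi_H\in\VW$, and more to the point simply compute $\CB((\id+\CK)\Vu_H,(\id+\CK)\Vpsi_H)=\CB(\Vu-\CG(\Vf),(\id+\CK)\Vpsi_H)=(\Vf,(\id+\CK)\Vpsi_H)-\CB(\CG(\Vf),(\id+\CK)\Vpsi_H)$, and the last term vanishes because $\CG(\Vf)\in\VW$ while $\CB$ restricted with a first argument in $\VW$ against $(\id+\CK)\Vpsi_H$ need not vanish — so the correct route is to use the orthogonality in the form $\CB((\id+\CK)\Vu_H,\Vw)=0$, meaning I should instead write $\CB(\CG(\Vf),(\id+\CK)\Vpsi_H)$ and reduce via the fact that testing is symmetric in the decomposition: actually $(\id+\CK)\Vpsi_H$ decomposes with coarse part $\Vpsi_H$ and fine part $\CK\Vpsi_H\in\VW$, and $\CB(\CG\Vf,\cdot)$ on $\VW$ equals $(\Vf,\cdot)$, so $\CB(\CG\Vf,(\id+\CK)\Vpsi_H)=\CB(\CG\Vf,\Vpsi_H)+(\Vf,\CK\Vpsi_H)$; this does not obviously cancel, so the honest approach is the Galerkin-orthogonality one: define the bilinear form $\tilde\CB(\Vv_H,\Vpsi_H):=\CB((\id+\CK)\Vv_H,(\id+\CK)\Vpsi_H)$, show $\Vu_H$ satisfies \eqref{eq:idealLOD} by subtracting $\CB((\id+\CK)\Vu_H,(\id+\CK)\Vpsi_H)$ from $(\Vf,(\id+\CK)\Vpsi_H)=\CB(\Vu,(\id+\CK)\Vpsi_H)$ to obtain $\CB(\CG(\Vf),(\id+\CK)\Vpsi_H)$, then observe that $(\id+\CK)\Vpsi_H=(\id+\CK)\pi_H^E(\id+\CK)\Vpsi_H$ lies in the multiscale space and $\CB(\CG\Vf,\cdot)$ against anything of the form $(\id+\CK)(\text{coarse})$... the cleanest statement is that $\CB(\CG(\Vf),(\id-\pi_H^E)\zeta)=\CB(\CG(\Vf),\zeta)$ makes sense only when we know $(\id-\pi_H^E)(\id+\CK)\Vpsi_H\in\VW$, which holds since $\pi_H^E\CK=0$ and $\pi_H^E(\id-\pi_H^E)=0$; hence $\CB(\CG\Vf,(\id+\CK)\Vpsi_H)=\CB(\CG\Vf,\pi_H^E\Vpsi_H)+\CB(\CG\Vf,\text{something in }\VW)$ — I will sort the bookkeeping so that the $\VW$-component is killed by $\CB((\id+\CK)\Vu_H,\cdot)|_\VW=0$ applied after reversing roles, using that $\Vu=(\id+\CK)\Vu_H+\CG(\Vf)$. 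Uniqueness of the solution to \eqref{eq:idealLOD} then follows immediately from the inf-sup estimate \eqref{eq:infsupideal}.

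For the third bullet, the a priori estimate, I combine the decomposition $\Vu-(\id+\CK)\Vu_H=\CG(\Vf)$ from the first bullet with the ideal corrector estimate \eqref{eq:correcdiv} of Lemma \ref{lem:correcestimatesideal} applied to $\Vf\in\VH(\Div)$, which directly yields $\|\Vu-(\id+\CK)\Vu_H\|_{\curl,\omega}\lesssim H^2\alpha^{-1}\|\Vf\|_{\VH(\Div)}/H = H\alpha^{-1}\|\Vf\|_{\VH(\Div)}\lesssim H\|\Vf\|_{\VH(\Div)}$. For the second term, $\Vu-\Vu_H=\Vu-\pi_H^E\Vu\in\VW$, so by \eqref{eq:dualnormW} and the previous estimate, $\|\Vu-\Vu_H\|_{\VH(\Div)'}\lesssim \|\CK(\Vu_H)+\CG(\Vf)\|_{\VH(\Div)'}$; here I bound $\|\CG(\Vf)\|_{\VH(\Div)'}\lesssim H^2\|\Vf\|_{\VH(\Div)}$ directly from \eqref{eq:correcdiv}, while $\|\CK(\Vu_H)\|_{\VH(\Div)'}=\|\CG(\CL\Vu_H)\|_{\VH(\Div)'}\lesssim H\alpha^{-1}\|\CL\Vu_H\|_{\VH_0(\curl)'}$ from \eqref{eq:correcdual}, and $\|\CL\Vu_H\|_{\VH_0(\curl)'}\lesssim\|\Vu_H\|_{\curl,\omega}\lesssim\|\Vu\|_{\curl,\omega}$ by continuity of $\CB$ and stability of $\pi_H^E$; finally $\|\Vu\|_{\curl,\omega}\lesssim\gamma(\omega)^{-1}\|\Vf\|_{\VH(\Div)}$ — but this introduces a $\gamma(\omega)^{-1}$, which we do not want in the final bound. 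The resolution of this is the standard trick: instead of bounding $\|\CK\Vu_H\|_{\VH(\Div)'}$ through the operator norm, use $\Vu-\Vu_H=\CK(\Vu_H)+\CG(\Vf)$ and the triangle inequality the other way — $\|\CK(\Vu_H)\|_{\VH(\Div)'}\leq\|\Vu-\Vu_H\|_{\VH(\Div)'}$ is circular, so rather I note $\Vu-\Vu_H\in\VW$ and apply \eqref{eq:dualnormW}: $\|\Vu-\Vu_H\|_{\VH(\Div)'}\lesssim H\|\Vu-\Vu_H\|_{\curl,\omega}\leq H(\|\Vu-(\id+\CK)\Vu_H\|_{\curl,\omega}+\|\CK\Vu_H - \CK\Vu_H\|)$... i.e. $\Vu-\Vu_H=(\Vu-(\id+\CK)\Vu_H)+\CK\Vu_H = \CG(\Vf)+\CK\Vu_H$, and both are in $\VW$, so $\|\Vu-\Vu_H\|_{\curl,\omega}\lesssim\|\CG(\Vf)\|_{\curl,\omega}+\|\CK\Vu_H\|_{\curl,\omega}$, which again needs $\|\CK\Vu_H\|_{\curl,\omega}$. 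The genuinely clean path, which I expect to be the crux of the argument, is: $\|\Vu-\Vu_H\|_{\VH(\Div)'}\le\|\Vu-(\id+\CK)\Vu_H\|_{\VH(\Div)'}+\|\CK\Vu_H\|_{\VH(\Div)'}$; the first is $\le\|\CG(\Vf)\|_{\VH(\Div)'}\lesssim H^2\|\Vf\|_{\VH(\Div)}$ by \eqref{eq:correcdiv}; and for the second, since $\CK\Vu_H\in\VW$, \eqref{eq:dualnormW} gives $\|\CK\Vu_H\|_{\VH(\Div)'}\lesssim H\|\CK\Vu_H\|_{\curl,\omega}$, and $\|\CK\Vu_H\|_{\curl,\omega}\le\|\Vu-\Vu_H\|_{\curl,\omega}+\|\CG(\Vf)\|_{\curl,\omega}$ — the term $\|\Vu-\Vu_H\|_{\curl,\omega}=\|(\id-\pi_H^E)\Vu\|_{\curl,\omega}$ is the quantity we would need to control by a best-approximation/duality argument, and this is where the $\VH(\Div)$-regularity of $\Vf$ together with the inf-sup stability \eqref{eq:infsupanalytic} and an Aubin–Nitsche-type duality argument must enter to avoid an $\omega$-dependent constant. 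I expect the bookkeeping in this last step — showing $\|\Vu-\Vu_H\|_{\curl,\omega}\lesssim\|\Vf\|_{\VH(\Div)}$ with an $\omega$-independent constant — to be the main obstacle, and it is handled exactly as in \cite{GHV17}: write $\Vu-\Vu_H=\Vu-(\id+\CK)\Vu_H+\CK\Vu_H$, note the first summand is $\CG(\Vf)$ with the sharp $H$-bound, and bound $\CK\Vu_H$ in $\VH(\curl)$ by testing the corrector equation and exploiting that its data $\CL\Vu_H$ acts on $\VW$ only through $-\CB(\Vu_H,\cdot)=-\CB(\Vu,\cdot)+\CB(\Vu-\Vu_H,\cdot)=-(\Vf,\cdot)+\CB(\Vu-\Vu_H,\cdot)$, so that $\CK\Vu_H=-\CG(\Vf)+\CG\big(\CL(\Vu-\Vu_H)\big)|_\VW$ giving $\CK\Vu_H+\CG(\Vf)=\Vu-\Vu_H$ consistently, and the self-referential term is absorbed using \eqref{eq:correcdual} and $\|\CL(\Vu-\Vu_H)\|_{\VH_0(\curl)'}$ restricted to $\VW$, which by \eqref{eq:dualnormW} scales like $H^{-1}\|\Vu-\Vu_H\|_{\VH(\Div)'}$ — closing a fixed-point / absorption argument once $H$ is small enough under \eqref{eq:resolcond}. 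Assembling these pieces delivers \eqref{eq:idealapriori}.
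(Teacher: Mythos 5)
Your first bullet is proved correctly and exactly as in the paper (via \cite{GHV17}): both $\Vu-\Vu_H$ and $\CK(\Vu_H)+\CG(\Vf)$ lie in $\VW$ and induce the same functional $\Vw\mapsto(\Vf,\Vw)-\CB(\Vu_H,\Vw)$ on $\VW$, so the inf-sup stability of Lemma \ref{lem:wellposedW} identifies them. The first half of the third bullet is also fine: $\Vu-(\id+\CK)\Vu_H=\CG(\Vf)$ together with \eqref{eq:correcdiv} gives the $H$-rate.

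For the second bullet there is a genuine gap. What you need, and repeatedly circle around without ever establishing, is the identity $\CB(\Vw,(\id+\CK)\Vpsi_H)=0$ for all $\Vw\in\VW$, i.e.\ the $\CB$-orthogonality with the $\VW$-element in the \emph{first} slot. The definition of $\CK$ only gives $\CB((\id+\CK)\Vpsi_H,\Vw)=0$; the missing ingredient that converts one into the other is the Hermitian symmetry of $\CB$ (the coefficients $\mu,\varepsilon$ are real symmetric, so $\CB(\Vv,\Vpsi)=\overline{\CB(\Vpsi,\Vv)}$). With that, $\CB(\CG(\Vf),(\id+\CK)\Vpsi_H)=\overline{\CB((\id+\CK)\Vpsi_H,\CG(\Vf))}=0$ since $\CG(\Vf)\in\VW$, and \eqref{eq:idealLOD} follows from $\CB(\Vu,(\id+\CK)\Vpsi_H)=(\Vf,(\id+\CK)\Vpsi_H)$ and item one. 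Your phrase ``applied after reversing roles'' gestures at this but never names or justifies it; without it the step is not closed. (In a genuinely non-self-adjoint setting one would instead need an adjoint corrector.) Uniqueness via \eqref{eq:infsupideal} is correct.

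For the $\VH(\Div)^\prime$-estimate in the third bullet, the paper's intended argument is precisely the ``direct'' one you derive and then discard: write $\Vu-\Vu_H=\CK(\Vu_H)+\CG(\Vf)=-\CG(\CL\Vu_H)+\CG(\Vf)$ and apply \eqref{eq:correcdual} to the first summand and \eqref{eq:correcdiv} to the second, using $\|\CL\Vu_H\|_{\VH_0(\curl)^\prime}\lesssim\|\Vu_H\|_{\curl,\omega}\lesssim\|\Vu\|_{\curl,\omega}$. Your concern that this routes the bound through the stability of \eqref{eq:problem} is a fair observation about the statement, but the escape you propose does not work: the map $\VF\mapsto\CG(\VF)$ applied to $\CL(\Vu-\Vu_H)$ restricted to $\VW$ returns $\Vu-\Vu_H$ \emph{identically}, so there is no contraction to close a fixed-point argument; moreover the claimed scaling $\|\CL(\Vu-\Vu_H)\|_{\VW^\prime}\lesssim H^{-1}\|\Vu-\Vu_H\|_{\VH(\Div)^\prime}$ is false, since the term $(\mu\curl(\Vu-\Vu_H),\curl\Vw)$ is not controlled by the $\VH(\Div)^\prime$-norm of $\Vu-\Vu_H$. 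You should simply give the direct two-term estimate; the duality/absorption machinery belongs to the localized scheme (Theorem \ref{thm:apriori}), not here.
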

\begin{proof}
The proof of the first two items carries over verbatim from the elliptic case \cite{GHV17}. 
The a priori error estimate \eqref{eq:idealapriori} follows from the first item and Lemma \ref{lem:correcestimatesideal}.
\end{proof}

The theorem shows that $(\id+\CK)\Vu_H$ approximates the analytical solution with linear rate without assumptions on the regularity of the problem.
What is more, only the reasonable resolution condition $\omega H\lesssim 1$ is required, overcoming the pollution effect.
However, the determination of $\CK$ requires the solution of global problems, which limits the practical usability of the scheme.

\section{Quasi-local numerical homogenization}
\label{sec:LOD}
\subsection{Exponential decay and localized corrector}
The property that $\CK$ can be approximated by local correctors is directly linked to the decay properties of $\CG$ defined in \eqref{eq:correctorgreen}.
The following result states -- loosely speaking -- in which distance (measured in unit of the coarse mesh size $H$) from the support of the source term $\VF$ the weighted $\VH(\curl)$-norm of $\CG(\VF)$ becomes negligibly small.
For that, recall the definition of element patches $\UN^m(T)$ from Section \ref{subsec:mesh}.

\begin{proposition}
\label{prop:decay}
Let $T\in \CT_H$, $m\in\mathbb{N}$ and $\VF_T\in\VH_0(\curl)^\prime$ be a local source functional, i.e.\ $\VF_T(\Vv)=0$ for all $\Vv\in\VH_0(\curl)$ with $\supp(\Vv)\subset\Omega\setminus T$.
If \eqref{eq:resolcond} holds, there exists $0<\tilde{\beta}<1$ such that
\begin{equation}
\label{eq:expdecay}
\|\CG(\VF_T)\|_{\curl, \omega, \Omega\setminus \UN^m(T)}\lesssim \beta^m\|\VF_T\|_{\VH_0(\curl)^\prime}.
\end{equation}
\end{proposition}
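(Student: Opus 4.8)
The plan is to prove the exponential decay of $\CG(\VF_T)$ by the standard LOD argument adapted to the $\VH(\curl)$-setting, namely via a Caccioppoli-type estimate combined with an iteration over annular regions of coarse mesh-layers. Fix $\Vw:=\CG(\VF_T)\in\VW$. The key objects are cut-off functions $\eta=\eta_m\in W^{1,\infty}(\Omega)$ with values in $[0,1]$, equal to $0$ on $\UN^{m}(T)$, equal to $1$ outside $\UN^{m+1}(T)$, and with $\|\nabla\eta\|_{L^\infty}\lesssim H^{-1}$; denote by $R:=\UN^{m+1}(T)\setminus\UN^{m}(T)$ the ``annulus'' where $\nabla\eta$ is supported. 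Because $\VF_T$ is supported in $T$ and, for $m\geq 1$, $\eta$-weighted test functions vanish near $T$, we have $\CB(\Vw,\Vphi)=0$ for all admissible $\Vphi\in\VW$ appearing below; the whole argument is driven by the fact that $\Vw$ is ``discrete-harmonic'' away from $T$.

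First I would quantify the norm of $\Vw$ outside $\UN^{m}(T)$. The difficulty compared with the elliptic case is that the curl operator has a large kernel, so there is no coercivity; one only has the inf-sup stability of $\CB$ on $\VW$ from Lemma~\ref{lem:wellposedW}. Hence, instead of testing with $\Vw$ itself, I would use the regular decomposition $\Vw=\Vz+\nabla\theta$ (with the bounds \eqref{eq:regulardecomp}) and test $\CB(\Vw,\cdot)$ with an appropriately cut-off and $\pi_H^E$-corrected version of $\Vz-\nabla\theta$, exactly as in the proof of the inf-sup constant in Lemma~\ref{lem:wellposedW}, but localized by $\eta$. Concretely, the candidate test function is $\Vphi:=(\id-\pi_H^E)\big(\eta^2(\Vz-\nabla\theta)\big)$, which lies in $\VW$. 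Expanding $\CB(\Vw,\Vphi)=0$ and moving the $\eta^2$ inside the products produces, on the one hand, the ``good'' terms $\int \eta^2\mu|\curl\Vz|^2+\omega^2\int\eta^2\varepsilon\nabla\theta\cdot\nabla\theta$ controlling $\||\eta\Vw|\|^2$ via the norm equivalence, and on the other hand commutator-type remainder terms. These remainders are of two types: (i) terms where a derivative falls on $\eta$, hence are supported on $R$ and carry a factor $H^{-1}\|\Vz\|_{L^2(R)}$ or $\|\curl\Vz\|_{L^2(R)}$, which under $\omega H\lesssim 1$ are controlled by $\||\Vw|\|_{R}$ times the local energy; (ii) the $\pi_H^E$-correction terms, which by the local stability \eqref{eq:stabilityL2}--\eqref{eq:stabilitycurl} of $\pi_H^E$ and the support of $\eta^2(\Vz-\nabla\theta)$ are again localized to a fixed number of mesh-layers around $R$, i.e.\ to $\UN^{m+2}(T)\setminus\UN^{m-1}(T)$ up to the shape-regularity constant.

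Combining these estimates I would obtain the one-step decay inequality
\[
\|\CG(\VF_T)\|_{\curl,\omega,\Omega\setminus\UN^{m+1}(T)}^2
\;\leq\; C_0\,\|\CG(\VF_T)\|_{\curl,\omega,\,\UN^{m+2}(T)\setminus\UN^{m-2}(T)}^2,
\]
with $C_0$ independent of $H$, $m$, $\omega$. Writing $b_k:=\|\CG(\VF_T)\|_{\curl,\omega,\Omega\setminus\UN^{k}(T)}^2$, this reads $b_{m+1}\leq C_0(b_{m-2}-b_{m+2})$ (the right-hand side being the energy in a finite shell, bounded by the difference of tails), which upon rearranging gives $b_{m+2}\leq \frac{C_0}{1+C_0}\,b_{m-2}$; iterating over blocks of width $4$ (or $5$) layers yields geometric decay $b_m\lesssim \rho^{m}\,b_0$ with $\rho:=\big(\tfrac{C_0}{1+C_0}\big)^{1/5}<1$, and $b_0=\|\CG(\VF_T)\|_{\curl,\omega}^2\lesssim \alpha^{-2}\|\VF_T\|_{\VH_0(\curl)'}^2$ by \eqref{eq:correcdual}. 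Taking square roots and setting $\beta:=\sqrt{\rho}$ (any fixed number in $(\rho^{1/2},1)$ to absorb the polynomially growing overlap constants $C_{\mathrm{ol},m}$) gives \eqref{eq:expdecay}. The main obstacle, and the place where genuine care is required, is step two: because the energy does not dominate the full $\VH(\curl)$-norm, the test function must be built from the regular decomposition rather than from $\Vw$ directly, and one must check that the regular-decomposition bounds \eqref{eq:regulardecomp} are themselves \emph{local} enough (they involve $\UN^3(T)$-patches) so that the resulting recursion still closes with a \emph{fixed} finite number of mesh-layers of overlap; this forces the shells in the iteration to have width at least $3$, which is harmless but must be tracked. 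A secondary technical point is that $\eta$ is only Lipschitz, so $\eta^2(\Vz-\nabla\theta)$ is not a gradient plus $H^1$ field in the clean sense, and the application of $\pi_H^E$ and its commuting/stability properties to this argument has to be justified by a density or piecewise argument, again as in \cite{GHV17}.
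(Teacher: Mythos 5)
Your proposal is correct and follows essentially the same route as the paper, which itself only states that the proof is ``easily adapted from the elliptic case in \cite{GHV17} using the inf-sup-stability of $\CB$ over $\VW$ from Lemma \ref{lem:wellposedW}'' --- precisely the adaptation you carry out, namely replacing the coercivity-based Caccioppoli/iteration argument by testing with a cut-off, $(\id-\pi_H^E)$-corrected version of the regular-decomposition field $\Vz-\nabla\theta$. The technical points you flag (keeping the gradient part a gradient, i.e.\ $\nabla(\eta^2\theta)$ rather than $\eta^2\nabla\theta$, and widening the shells to accommodate the $\UN^3(T)$-locality of \eqref{eq:regulardecomp} and the one-layer spreading of $\pi_H^E$) are exactly the ones that must be tracked, and your iteration and the final bound via \eqref{eq:correcdual} are standard and sound.
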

\begin{proof}
The proof can be easily adapted from the elliptic case in \cite{GHV17} using the inf-sup-stability of $\CB$ over $\VW$ from Lemma \ref{lem:wellposedW}.
\end{proof}

The result can be used to approximate $\CK$, which has a non-local argument, via
\[\CK(\Vv_H)=-\sum_{T\in\CT_H}\CG(\CL_T(\Vv_H)),\]
where the localized differential operator $\CL_T:\VH(\curl, T)\to\VH(\curl, \Omega)^\prime$ is associated with $\CB_T$, the restriction of $\CB$ to the element $T$.
Proposition \ref{prop:decay} now suggests to truncate the computation of $\CG(\VF_T)$ to the patches $\UN^m(T)$ and then collect the results from all elements $T$.
Typically, $m$ is referred to as \emph{oversampling parameter}.

\begin{definition}[Localized Corrector Approximation]
\label{def:localcorrec}
For any element $T\in\CT_H$ we define its patch $\Omega_T:=\UN^m(T)$.
Let $\VF\in\VH_0(\curl)^\prime$ be the sum of local functionals, i.e.\ $\VF=\sum_{T\in\CT_H}\VF_T$ with $\VF_T$ as in Proposition \ref{prop:decay}.
Denote by $\pi_{H, \Omega_T}^E: \VH_0(\curl, \Omega)\to\mathring{\CN}(\CT_H(\Omega_T))$ the Falk-Winther interpolation operator which enforces essential boundary conditions (i.e.\ zero tangential traces) on $\partial\Omega_T$.
We then define
\begin{equation}
\label{eq:Wlocalnonconf}
\VW(\Omega_T):=\{\Vw\in\VH_0(\curl)| \Vw=0 \text{ outside }\Omega_T, \pi_{H, \Omega_T}^E\Vw=0\}\nsubseteq \VW.
\end{equation}
We call $\CG_{T,m}(\VF_T)\in\VW(\Omega_T)$ the \emph{localized corrector} if it solves
\begin{equation}
\label{eq:localcorrec}
\CB(\CG_{T,m}(\VF_T), \Vw)=\VF_T(\Vw)\qquad \text{for all }\Vw\in\VW(\Omega_T).
\end{equation}
The global corrector approximation is then given by
\[\CG_m(\VF)=\sum_{T\in\CT_H}\CG_{T,m}(\VF_T).\]
\end{definition}
\indent
Observe that problem \eqref{eq:localcorrec} is only formulated on the patch $\Omega_T$.
Its well-posedness can be proved as in Lemma \ref{lem:wellposedW}: For $\Vw\in\VW(\Omega_T)$, use $(\id-\pi_{H,\Omega_T}^E) F(\Vw)\in\VW(\Omega_T)$ as test function.
We emphasize that the definition of $\VW(\Omega_T)$ via $\pi_{H, \Omega_T}^E$ is needed to make this test function a member of $\VW(\Omega_T)$, otherwise the support would be enlarged.
This is a non-conforming definition of the localized corrector (i.e.\ $\pi_H^E\CG_m(\cdot)\neq 0$), so that additional terms appear in the error analysis.
However, the non-conformity error only plays a role near the boundary of $\partial\Omega_T$ and can therefore be controlled very well.

\begin{theorem}
\label{thm:correctorerror}
Let $\CG(\VF)$ be the ideal Green's corrector and $\CG_m(\VF)$ the localized  corrector from Definition \ref{def:localcorrec}.
 Under \eqref{eq:resolcond}, there exists $0<\beta<1$ such that
\begin{align}
\label{eq:truncationerror}
\|\CG(\VF)-\CG_m(\VF)\|_{\curl, \omega}&\lesssim \sqrt{C_{\mathrm{ol}, m}}\,\beta^m\Bigl(\sum_{T\in \CT_H}\|\VF_T\|^2_{\VH_0(\curl)^\prime}\Bigr)^{1/2},\\
\label{eq:nonconferror}
\|\pi_H^E\CG_m(\VF)\|_{\curl, \omega}&\lesssim \sqrt{C_{\mathrm{ol}, m}\,}\beta^m\Bigl(\sum_{T\in \CT_H}\|\VF_T\|^2_{\VH_0(\curl)^\prime}\Bigr)^{1/2}.
\end{align}
\end{theorem}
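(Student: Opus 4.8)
The plan is to estimate the two quantities elementwise over the patches and then sum using the finite overlap constant $C_{\mathrm{ol},m}$. The natural starting point is the splitting $\CG(\VF)-\CG_m(\VF)=\sum_{T\in\CT_H}\bigl(\CG(\VF_T)-\CG_{T,m}(\VF_T)\bigr)$, so it suffices to control each local error $e_T:=\CG(\VF_T)-\CG_{T,m}(\VF_T)$. The difficulty is that $\CG(\VF_T)\in\VW$ while $\CG_{T,m}(\VF_T)\in\VW(\Omega_T)$, and these spaces are \emph{not} nested because of the non-conforming definition via $\pi^E_{H,\Omega_T}$; hence $e_T$ does not lie in a single space on which we have inf-sup stability, and we cannot simply test the error equation against itself. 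This non-conformity is exactly what the theorem's second estimate \eqref{eq:nonconferror} is designed to quantify, so the two estimates have to be proved in tandem.

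First I would introduce a cutoff function $\eta\in\mathring{\CN}$-compatible (a piecewise affine $\eta$ with $\eta=0$ on $\UN^{m-2}(T)$ and $\eta=1$ outside $\UN^{m-1}(T)$, $\|\nabla\eta\|_\infty\lesssim H^{-1}$) and use Proposition \ref{prop:decay} to conclude that $\CG(\VF_T)$ is already exponentially small outside $\UN^m(T)$: $\|\CG(\VF_T)\|_{\curl,\omega,\Omega\setminus\UN^m(T)}\lesssim\tilde\beta^m\|\VF_T\|_{\VH_0(\curl)'}$. The standard LOD strategy is then to construct a good approximation to $\CG(\VF_T)$ inside $\VW(\Omega_T)$ by truncating: one applies the regular decomposition of Proposition \ref{p:proj-pi-H-E} to $\CG(\VF_T)$, multiplies the $\VH^1$-pieces by the cutoff $\eta$, and then subtracts $\pi^E_{H,\Omega_T}$ of the result to land in $\VW(\Omega_T)$; the commuting/projection properties together with the local stability bounds \eqref{eq:stabilityL2}--\eqref{eq:stabilitycurl} and \eqref{eq:regulardecomp} show this truncation differs from $\CG(\VF_T)$ by at most $O(\tilde\beta^m\|\VF_T\|)$ in the $\|\cdot\|_{\curl,\omega}$-norm (the extra factors of $\omega H$ generated by $\nabla\eta$ terms are absorbed via \eqref{eq:resolcond}). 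Denote this truncated function $w_T\in\VW(\Omega_T)$.

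Next I would use the inf-sup stability of $\CB$ on $\VW(\Omega_T)$ (which holds with an $\omega$-independent constant by the same argument as Lemma \ref{lem:wellposedW}, as remarked after Definition \ref{def:localcorrec}) together with the Galerkin-type orthogonality $\CB(\CG_{T,m}(\VF_T)-\text{(something)},\Vw)=0$. Concretely: for $\Vw\in\VW(\Omega_T)$ one has $\CB(\CG_{T,m}(\VF_T)-w_T,\Vw)=\VF_T(\Vw)-\CB(w_T,\Vw)=\CB(\CG(\VF_T)-w_T,\Vw)+\bigl(\VF_T(\Vw)-\CB(\CG(\VF_T),\Vw)\bigr)$, and the second bracket vanishes \emph{only if} $\Vw$ were admissible in the ideal problem — but $\Vw\in\VW(\Omega_T)\nsubseteq\VW$, so this is precisely the non-conforming consistency term $\CB(\CG(\VF_T),\pi^E_H\Vw)$ wait — more carefully, $\CB(\CG(\VF_T),\Vw)=\VF_T(\Vw)$ holds for $\Vw\in\VW$, and for general $\Vw\in\VW(\Omega_T)$ we have $\Vw-\pi^E_H\Vw\in\VW$, so $\VF_T(\Vw-\pi^E_H\Vw)=\CB(\CG(\VF_T),\Vw-\pi^E_H\Vw)$ and the defect is $\VF_T(\pi^E_H\Vw)-\CB(\CG(\VF_T),\pi^E_H\Vw)$. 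Since $\pi^E_H\Vw$ is supported near $\partial\Omega_T$ (because $\pi^E_{H,\Omega_T}\Vw=0$ and the two interpolants agree away from the boundary layer) and $\CG(\VF_T)$ is exponentially small there, together with $\VF_T$ being supported in $T$ far inside $\Omega_T$ (so $\VF_T(\pi^E_H\Vw)=0$ for $m\geq 2$), this defect is bounded by $\tilde\beta^m\|\VF_T\|\,\|\Vw\|_{\curl,\omega}$. Combining the inf-sup bound for $\CG_{T,m}(\VF_T)-w_T$ with the truncation bound for $\CG(\VF_T)-w_T$ gives $\|e_T\|_{\curl,\omega}\lesssim\tilde\beta^m\|\VF_T\|_{\VH_0(\curl)'}$, and since $e_T$ is supported in $\UN^m(T)$ the finite-overlap property yields $\|\CG(\VF)-\CG_m(\VF)\|_{\curl,\omega}^2\lesssim C_{\mathrm{ol},m}\sum_T\tilde\beta^{2m}\|\VF_T\|^2$, which is \eqref{eq:truncationerror} with $\beta:=\tilde\beta$.

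For \eqref{eq:nonconferror} I would exploit that $\pi^E_H\CG(\VF)=0$ (the ideal corrector is conforming), so $\pi^E_H\CG_m(\VF)=\pi^E_H\bigl(\CG_m(\VF)-\CG(\VF)\bigr)=-\sum_T\pi^E_H e_T$; by the $\|\cdot\|_{\curl,\omega}$-stability of $\pi^E_H$ under \eqref{eq:resolcond} and the elementwise bound on $e_T$, the same finite-overlap summation gives the claimed estimate with the identical $\beta$. The main obstacle, as flagged above, is the careful bookkeeping in the consistency/defect term: one must verify that $\pi^E_H\Vw$ for $\Vw\in\VW(\Omega_T)$ is indeed localized to an $O(H)$-neighbourhood of $\partial\Omega_T$ — this rests on the locality of the Falk-Winther operator (its value on an element $K$ depends only on $\Vw|_{\UN(K)}$) and on comparing $\pi^E_H$ with $\pi^E_{H,\Omega_T}$, which agree on elements whose patch does not touch $\partial\Omega_T$ — so that Proposition \ref{prop:decay} can be applied to kill it. Everything else is the routine LOD cutoff-and-inf-sup machinery adapted from \cite{GHV17}.
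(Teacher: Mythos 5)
Your local estimate $\|\CG(\VF_T)-\CG_{T,m}(\VF_T)\|_{\curl,\omega}\lesssim\tilde\beta^m\|\VF_T\|_{\VH_0(\curl)'}$ is sound and follows essentially the paper's route (Strang's second lemma: cutoff-truncated best approximation in $\VW(\Omega_T)$ plus a consistency defect killed by the decay of Proposition \ref{prop:decay} and the locality of $\VF_T$). Your handling of the defect via the splitting $\Vw=(\Vw-\pi_H^E\Vw)+\pi_H^E\Vw$ with $\Vw-\pi_H^E\Vw\in\VW$ and $\pi_H^E\Vw$ supported in a ring near $\partial\Omega_T$ is a clean variant of the paper's choice of a conforming comparison function $\Vphi$ that agrees with the test function near $T$; both rest on the same mechanism. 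Likewise, your proof of \eqref{eq:nonconferror} is correct: $\pi_H^E\CG_{T,m}(\VF_T)=-\pi_H^E\bigl(\CG(\VF_T)-\CG_{T,m}(\VF_T)\bigr)$ \emph{is} genuinely supported in a ring inside $\UN^{m+1}(T)$, so the finite-overlap summation applies there.

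The gap is in your passage from the local bound to \eqref{eq:truncationerror}. You assert that $e_T:=\CG(\VF_T)-\CG_{T,m}(\VF_T)$ is supported in $\UN^m(T)$ and then invoke finite overlap to get $\|\sum_Te_T\|^2\lesssim C_{\mathrm{ol},m}\sum_T\|e_T\|^2$. This is false: only $\CG_{T,m}(\VF_T)$ is supported in $\Omega_T$, while $\CG(\VF_T)$ is a globally supported function that is merely \emph{exponentially small} outside $\UN^m(T)$. Without local support the overlap inequality does not apply, and the triangle inequality alone only yields an $\ell^1$-sum $\sum_T\|\VF_T\|_{\VH_0(\curl)'}$, which is weaker than the claimed $\sqrt{C_{\mathrm{ol},m}}\bigl(\sum_T\|\VF_T\|^2_{\VH_0(\curl)'}\bigr)^{1/2}$ by a factor of order $\sqrt{\operatorname{card}\CT_H}$. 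The paper avoids this by splitting
\[\|\CG(\VF)-\CG_m(\VF)\|_{\curl,\omega}\leq\|(\id-\pi_H^E)(\CG(\VF)-\CG_m(\VF))\|_{\curl,\omega}+\|\pi_H^E\CG_m(\VF)\|_{\curl,\omega},\]
bounding the second term by \eqref{eq:nonconferror}, and treating the first term — which lies in $\VW$ — by the inf-sup stability of Lemma \ref{lem:wellposedW}: one tests with a single $\Vw\in\VW$, writes $\CB\bigl(\sum_Te_T,\Vw\bigr)$ as a sum over $T$, and localizes $\Vw$ element by element with cutoffs so that Cauchy--Schwarz over the patches produces the $\sqrt{C_{\mathrm{ol},m}}$ and the $\ell^2$-sum. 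You need to supply this (or an equivalent) global duality/localization argument; the "routine finite-overlap" step as you state it does not close the proof of \eqref{eq:truncationerror}.
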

The proof is postponed to Subsection \ref{subsec:proof}.

\subsection{The quasi-local numerical homogenization scheme}
Following the above motivation, we define a quasi-local numerical homogenization scheme by replacing $\CK$ in the ideal scheme \eqref{eq:idealLOD} with $\CK_m$.

\begin{definition}
Let $\CK_m$ be defined as described in the previous subsection.
The \emph{quasi-local numerical homogenization scheme} seeks $\Vu_{H,m}\in \mathring{\CN}(\CT_H)$ such that
\begin{equation}
\label{eq:LOD}
\CB((\id+\CK_m)\Vu_{H,m}, (\id+\CK_m)\Vv_H)=(\Vf, (\id+\CK_m)\Vv_H)\qquad \text{ for all }\Vv_H\in\mathring{\CN}(\CT_H).
\end{equation}
\end{definition}
\indent
We observe that $\CK_m$ can be computed by solving local decoupled problems, see \cite{GHV17} for details.
Note that the spaces $\VW(\Omega_T)$ are still infinite dimensional so that in practice, we require an additional fine-scale discretization of the corrector problems. 
We omit this step here and refer the reader to \cite{GHV17} for the elliptic case and \cite{Pet17} for the Helmholtz equation.

We now prove the well-posedness and the a priori error estimate for the quasi-local numerical homogenization scheme.

\begin{theorem}[Well-posedness of \eqref{eq:LOD}]
\label{thm:LODwellposed}
If the resolution condition \eqref{eq:resolcond} and the oversampling condition
\begin{equation}
\label{eq:oversamplcond}
m\gtrsim|\log\bigl(\gamma(\omega)/\sqrt{C_{\mathrm{ol}, m}}\bigr)|/|\log(\beta)|
\end{equation}
are fulfilled, $\CB$ is inf-sup-stable over $(\id+\CK_m)\mathring{\CN}(\CT_H)$, i.e.\
\[\inf_{\Vv_H\in\mathring{\CN}(\CT_H)\setminus\{0\}}\sup_{\Vpsi_H\in\mathring{\CN}(\CT_H)\setminus\{0\}}\frac{|\CB((\id+\CK_m)\Vv_H, (\id+\CK_m)\Vpsi_H)|}{\|\Vv_H\|_{\curl, \omega}\|\Vpsi\|_{\curl, \omega}}\geq \gamma_{\mathrm{LOD}}(\omega) \approx\gamma(\omega).\]
\end{theorem}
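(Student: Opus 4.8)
The plan is to compare the localized bilinear form to the ideal one, for which inf-sup stability over $(\id+\CK)\mathring{\CN}(\CT_H)$ was already established in \eqref{eq:infsupideal}, and to control the perturbation $\CK-\CK_m$ by the corrector error estimates of Theorem \ref{thm:correctorerror}. First I would fix $\Vv_H\in\mathring{\CN}(\CT_H)$ with $\|\Vv_H\|_{\curl,\omega}=1$ and invoke \eqref{eq:infsupideal} to produce a test function $\Vpsi_H\in\mathring{\CN}(\CT_H)$, normalized so that $\|\Vpsi_H\|_{\curl,\omega}=1$, with
\[|\CB((\id+\CK)\Vv_H,(\id+\CK)\Vpsi_H)|\gtrsim\gamma(\omega).\]
Then I would estimate the difference
\[\bigl|\CB((\id+\CK_m)\Vv_H,(\id+\CK_m)\Vpsi_H)-\CB((\id+\CK)\Vv_H,(\id+\CK)\Vpsi_H)\bigr|,\]
splitting it by bilinearity into terms involving $(\CK_m-\CK)\Vv_H$ and $(\CK_m-\CK)\Vpsi_H$ (and the mixed term), and bounding each with continuity of $\CB$ in the $\|\cdot\|_{\curl,\omega}$-norm together with the stability of $\CK,\CK_m$ and the estimate $\|(\CK-\CK_m)\Vw_H\|_{\curl,\omega}\lesssim\sqrt{C_{\mathrm{ol},m}}\,\beta^m\|\Vw_H\|_{\curl,\omega}$ that follows from \eqref{eq:truncationerror} applied with $\VF=\CL(\Vw_H)=\sum_T\CL_T(\Vw_H)$ and $\sum_T\|\CL_T(\Vw_H)\|^2_{\VH_0(\curl)^\prime}\lesssim\|\Vw_H\|^2_{\curl,\omega}$.

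This yields, for the localized form tested against $(\id+\CK_m)\Vpsi_H$,
\[|\CB((\id+\CK_m)\Vv_H,(\id+\CK_m)\Vpsi_H)|\gtrsim\gamma(\omega)-C\sqrt{C_{\mathrm{ol},m}}\,\beta^m,\]
so that the oversampling condition \eqref{eq:oversamplcond}, which guarantees $\sqrt{C_{\mathrm{ol},m}}\,\beta^m\lesssim\gamma(\omega)$ with a small enough hidden constant, absorbs the perturbation and leaves $\gamma_{\mathrm{LOD}}(\omega)\approx\gamma(\omega)$. To finish I would note that the admissible test function for the claimed inf-sup inequality is $\Vpsi_H$ itself (whose $\|\cdot\|_{\curl,\omega}$-norm is $1$), so no further normalization is needed; the constant absorbed in $\lesssim$ depends only on the continuity constant of $\CB$, the stability constants of $\pi_H^E$ and $\CK,\CK_m$, and $\alpha$ from Lemma \ref{lem:wellposedW}.

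\textbf{Main obstacle.} The delicate point is the passage from \eqref{eq:truncationerror}, which is stated for a generic source $\VF=\sum_T\VF_T$ decomposed into \emph{element-local} functionals, to the bound on $(\CK-\CK_m)\Vw_H$: one must check that $\CL(\Vw_H)$ indeed decomposes as $\sum_T\CL_T(\Vw_H)$ with each $\CL_T(\Vw_H)$ supported (as a functional) on $T$ and satisfying $\|\CL_T(\Vw_H)\|_{\VH_0(\curl)^\prime}\lesssim\|\Vw_H\|_{\curl,\omega,T}$ uniformly in $\omega$ under \eqref{eq:resolcond}, so that the $\ell^2$-sum over $T$ is controlled by $\|\Vw_H\|_{\curl,\omega}$; this is exactly the point where the $\omega$-weighted norm and the resolution condition enter and must be handled carefully. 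A secondary subtlety is that $\CK_m$ is non-conforming ($\pi_H^E\CK_m\neq0$), but since the argument only uses the $\|\cdot\|_{\curl,\omega}$-stability of $\CK_m$ and the norm estimate \eqref{eq:nonconferror} enters nowhere in the stability proof (it is needed only later for the a priori error), this does not cause trouble here.
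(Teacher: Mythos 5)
Your proof is correct and rests on the same two pillars as the paper's argument: the ideal inf-sup estimate \eqref{eq:infsupideal} and the exponential closeness $\|(\CK-\CK_m)\Vw_H\|_{\curl,\omega}\lesssim\sqrt{C_{\mathrm{ol},m}}\,\beta^m\|\Vw_H\|_{\curl,\omega}$ obtained from \eqref{eq:truncationerror}, with the perturbation absorbed via the oversampling condition \eqref{eq:oversamplcond}. The bookkeeping, however, is genuinely different. You apply \eqref{eq:infsupideal} directly to $\Vv_H$ and treat the localized form as a perturbation of the ideal one, bounding both cross terms $\CB((\CK_m-\CK)\Vv_H,(\id+\CK_m)\Vpsi_H)$ and $\CB((\id+\CK)\Vv_H,(\CK_m-\CK)\Vpsi_H)$ by continuity and the stability of $\CK$ and $\CK_m$. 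The paper instead applies the ideal inf-sup to $\tilde{\Vv}_H=\pi_H^E(\id+\CK_m)\Vv_H$ and exploits the exact identity $\CB((\id+\CK_m)\Vv_H,(\id+\CK)\Vpsi_H)=\CB((\id+\CK)\tilde{\Vv}_H,(\id+\CK)\Vpsi_H)$, which follows from $\CB(\Vw,(\id+\CK)\Vpsi_H)=0$ for $\Vw\in\VW$; this leaves only one perturbation term but requires the extra step of recovering $\|\Vv_H\|_{\curl,\omega}\lesssim\|\tilde{\Vv}_H\|_{\curl,\omega}$ by absorbing a further $\sqrt{C_{\mathrm{ol},m}}\,\beta^m\|\Vv_H\|_{\curl,\omega}$ contribution. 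Your route is shorter and avoids $\pi_H^E$ entirely; the paper's route has the side benefit that the $\tilde{\Vv}_H$-identity reappears in the a priori analysis. Your diagnosis of the main obstacle is accurate: one must decompose $\CL(\Vw_H)=\sum_T\CL_T(\Vw_H)$ into element-local functionals with $\sum_T\|\CL_T(\Vw_H)\|^2_{\VH_0(\curl)'}\lesssim\|\Vw_H\|^2_{\curl,\omega}$, the dual norm being taken with respect to $\|\cdot\|_{\curl,\omega}$ so that the constant is $\omega$-independent. Your observation that the non-conformity estimate \eqref{eq:nonconferror} plays no role in the stability proof is also correct; in the paper's version it is the $\VH(\curl)$-stability of $\pi_H^E$ combined with \eqref{eq:truncationerror}, not \eqref{eq:nonconferror}, that controls $\pi_H^E(\CK-\CK_m)\Vv_H$.
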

\begin{theorem}[A priori estimate]
\label{thm:apriori}
Let $\Vu$ denote the analytical solution to \eqref{eq:problem} and $\Vu_{H,m}$ the solution to \eqref{eq:LOD}.
If the resolution condition \eqref{eq:resolcond} and the oversampling condition 
\begin{equation}
\label{eq:oversamplcond1}
m\gtrsim|\log\bigl(\gamma_{\mathrm{LOD}}(\omega)/\sqrt{C_{\mathrm{ol}, m}}\bigr)|/|\log(\beta)|
\end{equation} 
are fulfilled, then 
\begin{equation}
\label{eq:apriori}
\|\Vu-(\id+\CK_m)\Vu_{H,m}\|_{\curl, \omega}\lesssim (H+\beta^m \gamma^{-1}(\omega))\|\Vf\|_{\VH(\Div)}.
\end{equation}
\end{theorem}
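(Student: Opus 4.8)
The plan is to combine a Céa-type quasi-optimality argument (made possible by the inf-sup stability of Theorem~\ref{thm:LODwellposed}) with the identity $\Vu=(\id+\CK)\Vu_H+\CG(\Vf)$ from Theorem~\ref{thm:idealLOD} and the truncation estimates of Theorem~\ref{thm:correctorerror}. First I would note that, since the oversampling condition \eqref{eq:oversamplcond1} implies \eqref{eq:oversamplcond}, Theorem~\ref{thm:LODwellposed} gives a well-posed scheme \eqref{eq:LOD} with inf-sup constant $\gamma_{\mathrm{LOD}}(\omega)\approx\gamma(\omega)$. Writing the error as
\[
\Vu-(\id+\CK_m)\Vu_{H,m}=\bigl(\Vu-(\id+\CK_m)\Vu_H\bigr)+(\id+\CK_m)(\Vu_H-\Vu_{H,m}),
\]
the first term is a consistency/localization error I can bound directly, and the second is controlled by the discrete inf-sup stability once I test the residual.

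For the consistency term, I would use $\Vu=(\id+\CK)\Vu_H+\CG(\Vf)$ to write
\[
\Vu-(\id+\CK_m)\Vu_H=(\CK-\CK_m)\Vu_H+\CG(\Vf),
\]
so that $\|\Vu-(\id+\CK_m)\Vu_H\|_{\curl,\omega}\le\|(\CK-\CK_m)\Vu_H\|_{\curl,\omega}+\|\CG(\Vf)\|_{\curl,\omega}$. The second summand is $\lesssim H\alpha^{-1}\|\Vf\|_{\VH(\Div)}\lesssim H\|\Vf\|_{\VH(\Div)}$ by \eqref{eq:correcdiv} of Lemma~\ref{lem:correcestimatesideal}. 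For the first, since $\CK\Vu_H=-\sum_T\CG(\CL_T\Vu_H)$ and $\CK_m\Vu_H=-\sum_T\CG_{T,m}(\CL_T\Vu_H)$, I invoke \eqref{eq:truncationerror} with $\VF_T=\CL_T\Vu_H$; using $\sum_T\|\CL_T\Vu_H\|^2_{\VH_0(\curl)^\prime}\lesssim\|\Vu_H\|_{\curl,\omega}^2$ (finite overlap of the patches and continuity of $\CB$) together with the stability $\|\Vu_H\|_{\curl,\omega}=\|\pi_H^E\Vu\|_{\curl,\omega}\lesssim\|\Vu\|_{\curl,\omega}$ and the a priori bound $\|\Vu\|_{\curl,\omega}\lesssim\gamma^{-1}(\omega)\|\Vf\|_{L^2(\Omega)}\lesssim\gamma^{-1}(\omega)\|\Vf\|_{\VH(\Div)}$ from \eqref{eq:infsupanalytic}, this yields $\|(\CK-\CK_m)\Vu_H\|_{\curl,\omega}\lesssim\sqrt{C_{\mathrm{ol},m}}\,\beta^m\gamma^{-1}(\omega)\|\Vf\|_{\VH(\Div)}$. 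Absorbing the polynomial factor $\sqrt{C_{\mathrm{ol},m}}$ into a slightly enlarged decay rate $\beta$ (still $<1$), the consistency term is $\lesssim(H+\beta^m\gamma^{-1}(\omega))\|\Vf\|_{\VH(\Div)}$.

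For the discrete error $\Ve_H:=\Vu_H-\Vu_{H,m}\in\mathring{\CN}(\CT_H)$, I test the residual: for any $\Vpsi_H\in\mathring{\CN}(\CT_H)$,
\[
\CB((\id+\CK_m)\Ve_H,(\id+\CK_m)\Vpsi_H)=(\Vf,(\id+\CK_m)\Vpsi_H)-\CB((\id+\CK_m)\Vu_{H,m},(\id+\CK_m)\Vpsi_H)+\bigl[\text{terms}\bigr],
\]
more precisely I compare \eqref{eq:LOD} with the equation satisfied by $\Vu_H$. Since $\Vu=(\id+\CK)\Vu_H+\CG(\Vf)$ solves \eqref{eq:problem} and $\CB(\CG(\Vf),(\id+\CK_m)\Vpsi_H)$ is not exactly zero (only $\CB(\CG(\Vf),\Vw)=(\Vf,\Vw)$ for $\Vw\in\VW$, and $(\id+\CK_m)\Vpsi_H$ is not a genuine corrected function), the residual splits into a truncation part governed by $\|(\CK-\CK_m)\Vpsi_H\|_{\curl,\omega}$ and a non-conformity part governed by $\|\pi_H^E\CK_m\Vpsi_H\|_{\curl,\omega}$, the latter controlled by \eqref{eq:nonconferror}. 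Both are $\lesssim\beta^m\|\Vpsi_H\|_{\curl,\omega}$ after absorbing $\sqrt{C_{\mathrm{ol},m}}$. Dividing by $\|\Vpsi_H\|_{\curl,\omega}$, taking the supremum, and using the inf-sup bound $\gamma_{\mathrm{LOD}}(\omega)\|\Ve_H\|_{\curl,\omega}\lesssim\sup_{\Vpsi_H}|\CB((\id+\CK_m)\Ve_H,(\id+\CK_m)\Vpsi_H)|/\|\Vpsi_H\|_{\curl,\omega}$, I obtain $\|\Ve_H\|_{\curl,\omega}\lesssim\gamma^{-1}(\omega)\bigl(\beta^m\|\Vu\|_{\curl,\omega}+\text{data of the consistency defect}\bigr)\lesssim\gamma^{-1}(\omega)\beta^m\|\Vf\|_{\VH(\Div)}$ (using again $\|\Vu\|_{\curl,\omega}\lesssim\gamma^{-1}(\omega)\|\Vf\|_{\VH(\Div)}$, and noting that the oversampling condition \eqref{eq:oversamplcond1} guarantees $\beta^m\gamma^{-1}(\omega)\lesssim 1$, so the extra $\gamma^{-1}(\omega)$ does not blow up the $H$-term). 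Finally, $\|(\id+\CK_m)\Ve_H\|_{\curl,\omega}\lesssim\|\Ve_H\|_{\curl,\omega}$ by the stability of $\id+\CK_m$ on $\mathring{\CN}(\CT_H)$, and a triangle inequality combining this with the consistency estimate gives \eqref{eq:apriori}.

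The main obstacle I anticipate is the careful bookkeeping of the residual in the discrete-error step: unlike the ideal scheme, where $\CB((\id+\CK)\Vu_H,\Vw)=0$ for all $\Vw\in\VW$ makes the Galerkin orthogonality exact, here one must simultaneously account for (i) the truncation error $\CK-\CK_m$ acting on both the trial and the test function, and (ii) the non-conformity $\pi_H^E\CK_m\neq0$, which introduces a genuinely new term absent in coercive LOD theory. Keeping track of which quantities are measured in $\|\cdot\|_{\curl,\omega}$ versus $\|\cdot\|_{\VH(\Div)^\prime}$, and making sure the $\gamma^{-1}(\omega)$ factors land only on the exponentially small $\beta^m$ contributions (so the final rate reads $H+\beta^m\gamma^{-1}(\omega)$ rather than $\gamma^{-1}(\omega)H$), is the delicate point; the oversampling condition \eqref{eq:oversamplcond1} is exactly what is needed to close this.
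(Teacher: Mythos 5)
Your splitting into a consistency part $\Vu-(\id+\CK_m)\Vu_H=(\CK-\CK_m)\Vu_H+\CG(\Vf)$ and a discrete part $(\id+\CK_m)(\Vu_H-\Vu_{H,m})$ is reasonable, and your consistency bound $\lesssim(H+\beta^m\gamma^{-1}(\omega))\|\Vf\|_{\VH(\Div)}$ is correct. The gap is in the discrete-error step. Testing the residual (which is of size $\sqrt{C_{\mathrm{ol},m}}\,\beta^m\|\Vu_H\|_{\curl,\omega}\lesssim\sqrt{C_{\mathrm{ol},m}}\,\beta^m\gamma^{-1}(\omega)\|\Vf\|_{\VH(\Div)}$) and dividing by the discrete inf-sup constant gives
\[
\|\Vu_H-\Vu_{H,m}\|_{\curl,\omega}\lesssim\gamma_{\mathrm{LOD}}^{-1}(\omega)\,\sqrt{C_{\mathrm{ol},m}}\,\beta^m\,\gamma^{-1}(\omega)\,\|\Vf\|_{\VH(\Div)},
\]
i.e.\ the stability constant appears \emph{squared}: once from $\|\Vu\|_{\curl,\omega}\lesssim\gamma^{-1}(\omega)\|\Vf\|_{\VH(\Div)}$ inside the residual, and once from $\gamma_{\mathrm{LOD}}^{-1}(\omega)\approx\gamma^{-1}(\omega)$. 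Your chain \quotes{$\gamma^{-1}(\omega)(\beta^m\|\Vu\|_{\curl,\omega}+\dots)\lesssim\gamma^{-1}(\omega)\beta^m\|\Vf\|_{\VH(\Div)}$} silently drops one of these factors. The oversampling condition \eqref{eq:oversamplcond1} only yields $\sqrt{C_{\mathrm{ol},m}}\,\beta^m\gamma_{\mathrm{LOD}}^{-1}(\omega)\lesssim1$, so invoking it to absorb the extra $\gamma^{-1}(\omega)$ destroys the factor $\beta^m$ altogether and leaves the non-convergent bound $\gamma^{-1}(\omega)\|\Vf\|_{\VH(\Div)}$; at best, a doubled oversampling condition would salvage $\beta^{m/2}$. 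Either way, \eqref{eq:apriori} as stated does not follow from this route.

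The missing ingredient is the duality argument of \cite{Pet17}, which is what the paper uses. Set $\Ve=\Vu-(\id+\CK_m)\Vu_{H,m}$ and $\Ve_{H,m}=(\id+\CK_m)\pi_H^E(\Ve)$, and estimate the coarse part $\|\Ve_{H,m}\|_{\curl,\omega}$ by testing against the solution $\Vz_H$ of a dual LOD problem with data $\Ve_{H,m}$. Galerkin orthogonality, $\CB(\Vw,(\id+\CK)\Vz_H)=0$ for $\Vw\in\VW$, and Theorem \ref{thm:correctorerror} then give $\|\Ve_{H,m}\|_{\curl,\omega}\lesssim\sqrt{C_{\mathrm{ol},m}}\,\beta^m\gamma_{\mathrm{LOD}}^{-1}(\omega)\bigl(\|\Ve-\Ve_{H,m}\|_{\curl,\omega}+\|\Ve\|_{\curl,\omega}\bigr)$, so the dangerous product $\gamma_{\mathrm{LOD}}^{-1}(\omega)\beta^m$ multiplies the \emph{error itself} and is absorbed via \eqref{eq:oversamplcond1}, rather than multiplying a term that already carries $\gamma^{-1}(\omega)\|\Vf\|_{\VH(\Div)}$. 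The remaining fine-scale part $(\id-\pi_H^E)(\Ve-\Ve_{H,m})\in\VW$ is then estimated with the $\omega$-independent inf-sup constant $\alpha$ of Lemma \ref{lem:wellposedW}, which is where the single power of $\gamma^{-1}(\omega)$ (from $\|\pi_H^E\Vu\|_{\curl,\omega}\lesssim\gamma^{-1}(\omega)\|\Vf\|_{\VH(\Div)}$) legitimately enters. Without this duality step, the scheme's inf-sup constant acts directly on a data-sized residual and the exponential term degrades to $\beta^m\gamma^{-2}(\omega)$.
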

\indent
Note that the oversampling condition \eqref{eq:oversamplcond1} is -- up to constants independent of $H$ and $\omega$ -- the same as condition \eqref{eq:oversamplcond}.
Since $C_{\mathrm{ol}, m}$ grows polynomially in $m$ for quasi-uniform meshes, it is satisfiable and depends on the behavior of $\gamma(\omega)$.
If $\gamma(\omega)\lesssim\omega^q$, we derive $m\approx \log(\omega)$, which is a better resolution condition than for a standard discretization.
Note that in \eqref{eq:apriori}, we can replace $\gamma^{-1}(\omega)$ with $C_{\mathrm{stab}}(\omega)$, the stability constant of the original problem \eqref{eq:problem}.
This is exactly the same a priori estimate as for Helmholtz problems in \cite{Pet17}.
To sum up, an oversampling parameter $m\approx |\log(\omega)|$ is sufficient for the stability of the LOD.
Requiring additionally $m\approx|\log(H)|$, we obtain a linear convergence rate for the error.

\subsection{Main proofs}
\label{subsec:proof}
Theorem \ref{thm:correctorerror} results from the exponential decay of $\CG$ in Proposition \ref{prop:decay}.

\begin{proof}[Proof of Theorem \ref{thm:correctorerror}]
We start by proving the following local estimate
\begin{equation}
\label{eq:truncerrorlocal}
\|\CG(\VF_T)-\CG_{T,m}(\VF_T)\|_{\curl, \omega}\lesssim \tilde{\beta}^m\|\VF_T\|_{\VH_0(\curl)^\prime}.
\end{equation}
By Strang's second Lemma we obtain
\begin{align*}
\|\CG(\VF_T)-\CG_{T,m}(\VF_T)\|_{\curl, \omega}&\lesssim \inf_{\Vw_{T,m}\in\VW(\Omega_T)}\|\CG(\VF_T)-\Vw_{T,m}\|_{\curl, \omega}\\
&\quad+\underset{\|\Vphi_{T,m}\|_{\curl, \omega}=1}{\sup_{\Vphi_{T,m}\in\VW(\Omega_T)}}|\CB(\CG(\VF_T), \Vphi_{T,m})-\VF_T(\Vphi_{T,m})|.
\end{align*}
The first term can be estimated as in \cite{GHV17}.
For the second term, we have due to \eqref{eq:correctorgreen} that it is equal to
\[\sup_{\Vphi_{T,m}\in\VW(\Omega_T), \|\Vphi_{T,m}\|_{\curl, \omega}=1}|\CB(\CG(\VF_T), \Vphi_{T,m}-\Vphi)-\VF_T(\Vphi_{T,m}-\Vphi)|\]
for any $\Vphi\in \VW$. 
Fixing $\Vphi_{T,m}=\Vz_{T, m}+\nabla \theta_{T,m}$, we choose $\Vphi=(\id-\pi_H^E)(\eta\Vz_{T,m}+\nabla(\eta\theta_{T,m}))$ with a cut-off function such that $\Vphi_{T,m}-\Vphi=0$ in $\UN^{m-2}(T)$.
Then $\VF_T(\Vphi_{T,m}-\Vphi)=0$ and we get with the stability of $\pi_H^E$ and \eqref{eq:regulardecomp}
\begin{align*}
|\CB(\CG(\VF_T), \Vphi_{T,m}-\Vphi)|&\lesssim \|\CG(\VF_T)\|_{\curl, \omega, \Omega\setminus \UN^{m-2}(T)}\|\Vphi_{T,m}-\Vphi\|_{\curl, \omega}\\
&\lesssim \|\CG(\VF_T)\|_{\curl, \omega, \Omega\setminus \UN^{m-2}(T)}.
\end{align*}
Combination with Proposition \ref{prop:decay} gives \eqref{eq:truncerrorlocal}.

For \eqref{eq:truncationerror}, we split the error as
\[\|\CG(\VF)-\CG_m(\VF)\|_{\curl, \omega}\leq \|(\id-\pi_H^E)(\CG(\VF)-\CG_m(\VF))\|_{\curl, \omega}+\|\pi_H^E\CG_m(\VF)\|_{\curl, \omega}.\]
The first term can be estimated with the procedure from \cite{GHV17}.
The second term is the left-hand side of \eqref{eq:nonconferror} and thus, it suffices to prove \eqref{eq:nonconferror}.
We observe that $\pi_H^E\CG_{T,m}(\VF_T)\neq 0$ only on a small ring $R\subset \UN^{m+1}(T)$ because $\pi_H^E$ and $\pi_{H, \Omega_T}^E$ only differ near the boundary of $\Omega_T$.
Hence, we get
\begin{align*}
\|\pi_H^E\CG_m(\VF)\|^2_{\curl, \omega}&\leq\sum_{T\in\CT_H}|(\pi_H^E\CG_m(\VF), \pi_H^E\CG_{T,m}(\VF_T))_{\curl, \omega}|\\
&\lesssim \sum_T\|\pi_H^E\CG_m(\VF)\|_{\curl, \omega, \UN^{m+1}(T)}\|\pi_H^E(\CG(\VF_T)-\CG_{T, m}(\VF_T))\|_{\curl, \omega}\\
&\lesssim \sqrt{C_{\mathrm{ol}, m}}\|\pi_H^E\CG_m(\VF)\|_{\curl, \omega}\Bigl(\sum_T\|\CG(\VF_T)-\CG_{T,m}(\VF_T)\|_{\curl, \omega}\Bigr)^{1/2}.
\end{align*}
Application of \eqref{eq:truncerrorlocal} gives the claim.
\end{proof}

The well-posedness of the quasi-local numerical scheme comes from the well-posedness of the ideal scheme (Theorem \ref{thm:idealLOD}) and the fact that the localized corrector is exponentially close to the ideal corrector.

\begin{proof}[Proof of Theorem \ref{thm:LODwellposed}]
Fix $\Vv_H\in\mathring{\CN}(\CT_H)$ and set $\tilde{\Vv}_H=\pi_H^E(\id+\CK_m)(\Vv_H)$.
According to Theorem \ref{thm:idealLOD}, there exists $\Vpsi_H\in\mathring{\CN}(\CT_H)$ with $\|\Vpsi_H\|_{\curl, \omega}=1$ such that
\[|\CB((\id+\CK)\tilde{\Vv}_H, (\id+\CK)\Vpsi_H)|\geq \gamma(\omega)\|\tilde{\Vv}_H\|_{\curl,\omega}.\]
As $\CB(\Vw, (\id+\CK)\Vpsi_H)=0$ for all $\Vw\in\VW$, we derive
\begin{align*}
\CB((\id+\CK_m)\Vv_H, (\id+\CK)\Vpsi_H)&=\CB((\id\!+\CK_m)\Vv_H\!-\!(\id-\pi_H^E)((\id+\CK_m)\Vv_H), (\id+\CK)\Vpsi_H)\\
&=\CB(\tilde{\Vv}_H, (\id+\CK)\Vpsi_H)=\CB((\id+\CK)\tilde{\Vv}_H, (\id+\CK)\Vpsi_H).
\end{align*}
This yields together with Theorem \ref{thm:correctorerror}
\begin{align*}
&\!\!\!\!|\CB((\id+\CK_m)\Vv_H, (\id+\CK_m)\Vpsi_H)|\\
&=|\CB((\id+\CK_m)\Vv_H, (\CK_m-\CK)\Vpsi_H)+\CB((\id+\CK)\Vv_H, (\id+\CK)\Vpsi_H)|\\
&=|\CB((\id+\CK_m)\Vv_H, (\CK_m-\CK)\Vpsi_H)+\CB((\id+\CK)\tilde{\Vv}_H, (\id+\CK)\Vpsi_H)|\\
&\geq \gamma(\omega)\|\tilde{\Vv}_H\|_{\curl, \omega}-C\sqrt{C_{\mathrm{ol}, m}}\,\beta^m\|(\id+\CK_m)\Vv_H\|_{\curl, \omega}.
\end{align*}
Moreover, we have 
\[\|(\id+\CK_m)\Vv_H\|_{\curl, \omega}\lesssim (1+\beta^m)\|\Vv_H\|_{\curl, \omega}\lesssim \|\Vv_H\|_{\curl, \omega},\]
since $\beta<1$, and
\begin{align*}
\|\Vv_H\|_{\curl, \omega}&=\|\pi_H^E(\id+\CK)\Vv_H\|_{\curl, \omega}=\|\pi_H^E(\id+\CK_m)\Vv_H+\pi_H^E(\CK-\CK_m)\Vv_H\|_{\curl, \omega}\\
&\lesssim \|\tilde{\Vv}_H\|_{\curl, \omega}+C\sqrt{C_{\mathrm{ol}, m}}\,\beta^m\|\Vv_H\|_{\curl, \omega}.
\end{align*}
If $m$ is large enough (indirectly implied by the oversampling condition), the second term can be hidden on the left-hand side.
Thus, we finally obtain
\[|\CB((\id+\CK_m)\Vv_H, (\id+\CK_m)\Vpsi_H)|\gtrsim (\gamma(\omega)-C\sqrt{C_{\mathrm{ol}, m}}\,\beta^m)\|\Vv_H\|_{\curl, \omega}.\]
Application of the oversampling condition \eqref{eq:oversamplcond} gives the assertion.
\end{proof}

The proof of the a priori error estimate is inspired by the procedure for the Helmholtz equation \cite{Pet17} and uses duality arguments.

\begin{proof}[Proof of Theorem \ref{thm:apriori}]
Denote by $\Ve$ the error $\Vu-(\id+\CK_m)\Vu_{H,m}$ and set $\Ve_{H, m}:=(\id+\CK_m)\pi_H^E(\Ve)$.
Let $\Vz_H\in\mathring{\CN}(\CT_H)$ be the solution to the dual problem
\begin{equation*}
\CB((\id+\CK_m)\Vv_H, (\id+\CK_m)\Vz_H)=(\Ve_{H,m}, (\id+\CK_m)\Vv_H)_{\curl, \omega}\qquad \text{for all }\Vv_H\in\mathring{\CN}(\CT_H).
\end{equation*}
Using the fact that $\CB(\Vw, (\id+\CK)\Vz_H)=0$ for all $\Vw\in\VW$ and employing the Galerkin orthogonality $\CB(\Ve, (\id+\CK_m)\Vz_H)=0$, we obtain that
\begin{align*}
\|\Ve_{H,m}\|^2_{\curl, \omega}&=\CB(\Ve_{H,m}, (\id+\CK_m)\Vz_H)\\
&=\CB(\Ve_{H,m}, (\CK_m-\CK)\Vz_H)+\CB(\Ve_{H,m}, (\id+\CK)\Vz_H)\\
&=\CB(\Ve-\Ve_{H,m}, (\CK-\CK_m)\Vz_H)-\CB(\pi_H^E(\Ve-\Ve_{H,m}), (\id+\CK)\Vz_H).
\end{align*}
Observe that $\pi_H^E(\Ve-\Ve_{H,m})=\pi_H^E\CK_m\pi_H^E(\Ve)$.
Theorem \ref{thm:correctorerror} and \ref{thm:LODwellposed} yield
\begin{align*}
\|\Ve_{H,m}\|_{\curl, \omega}^2&\lesssim \sqrt{C_{\mathrm{ol}, m}}\,\beta^m\|\Ve-\Ve_{H, m}\|_{\curl, \omega}\|\Vz_H\|_{\curl, \omega}\!+\!\sqrt{C_{\mathrm{ol}, m}}\,\beta^m\|\Ve\|_{\curl, \omega}\|\Vz_H\|_{\curl, \omega}\\
&\lesssim \sqrt{C_{\mathrm{ol}, m}}\,\beta^m\, \gamma_{\mathrm{LOD}}^{-1}(\omega)\, (\|\Ve-\Ve_{H, m}\|_{\curl, \omega}+\|\Ve\|_{\curl, \omega})\|\Ve_{H, m}\|_{\curl, \omega}.
\end{align*}
The triangle inequality gives
\[\|\Ve\|_{\curl, \omega}\leq \|(\id-\pi_H^E)(\Ve-\Ve_{H,m})\|_{\curl, \omega}+\|\pi_H^E(\Ve-\Ve_{H,m})\|_{\curl, \omega}+\|\Ve_{H,m}\|_{\curl, \omega}.\]
The above computations and \eqref{eq:nonconferror} imply with the resolution condition \eqref{eq:oversamplcond1}
\[\|\Ve\|_{\curl, \omega}\lesssim \|(\id-\pi_H^E)(\Ve-\Ve_{H,m})\|_{\curl, \omega}.\]

Observe that $\Ve-\Ve_{H,m}=\Vu-(\id+\CK_m)\pi_H^E(\Vu)-(\id+\CK_m)\pi_H^E\CK_m\Vu_{H,m}$.
Since $(\id-\pi_H^E)(\Ve-\Ve_{H,m})\in\VW$, Lemma \ref{lem:wellposedW} gives $\Vw\in\VW$ with $\|\Vw\|_{\curl, \omega}=1$ such that
\begin{align*}
&\!\!\!\!\|(\id-\pi_H^E)(\Ve-\Ve_{H,m})\|_{\curl, \omega}\\
&\lesssim |\CB((\id-\pi_H^E)(\Ve-\Ve_{H, m}), \Vw)|\\
&=|\CB(\Vu, \Vw)\!-\!\CB((\id\!+\CK_m)\pi_H^E\Vu, \Vw)\!-\!\CB((\id\!+\CK_m)\pi_H^E\CK_m\Vu_{H,m}, \Vw)\!-\!\CB(\pi_H^E\CK_m\pi_H^E\Ve, \Vw)|\\
&=|(\Vf, \Vw)\!-\!\CB((\CK_m-\!\CK)\pi_H^E\Vu, \Vw)\!-\!\CB((\CK_m-\!\CK)\pi_H^E\CK_m\Vu_{H,m}, \Vw)\!-\!\CB(\pi_H^E\CK_m\pi_H^E\Ve, \Vw)|.
\end{align*}
Theorems \ref{thm:correctorerror} and \ref{thm:LODwellposed} now give together with the stability of $\pi_H^E$ and \eqref{eq:dualnormW}
\begin{align*}
&\!\!\!\!\|(\id-\pi_H^E)(\Ve-\Ve_{H,m})\|_{\curl, \omega}\\
&\lesssim \bigl(H+\sqrt{C_{\mathrm{ol}, m}}\,\beta^m\gamma^{-1}(\omega)+C_{\mathrm{ol}, m}\,\beta^{2m}\gamma_{\mathrm{LOD}}^{-1}(\omega)\bigr)\|\Vf\|_{\VH(\Div)}+\sqrt{C_{\mathrm{ol}, m}}\,\beta^m\|\Ve\|_{\curl, \omega}.
\end{align*}
The last term can be hidden on the left-hand side and the third term can be absorbed in the second term.
\end{proof}

\section*{Conclusion}
In this paper, we presented and analyzed a numerical homogenization scheme for indefinite $\VH(\curl)$-problems, inspired by \cite{GHV17}.
We showed that the indefinite bilinear form is inf-sup-stable for $\omega H\lesssim 1$ over the kernel of the Falk-Winther interpolation operator, which is crucial for the analysis.
Under this reasonable resolution condition and the additional oversampling condition $m\approx |\log(\gamma(\omega))|$, the numerical homogenization method is stable and yields linear convergence (w.r.t.\ the mesh size) of the error in the $\VH(\curl)$-norm.
These conditions are similar for the Helmholtz equation, suggesting that they are optimal.
Incorporating impedance boundary conditions is subject of future research.

\section*{Acknowledgments}
Financial support by the DFG through project OH 98/6-1 is gratefully acknowledged.
The author would like to thank Dietmar Gallistl (KIT) for fruitful discussion on the subject, in particular for suggesting the non-conforming definition of the localized spaces.
Main ideas of this contribution evolved while the author enjoyed the kind hospitality of the Hausdorff Research Institute for Mathematics (HIM) in Bonn during the trimester program on multiscale problems.

\end{document}